\newcommand{\R}{\mathbb{R}}
\newcommand{\N}{\mathbb{N}}
\newcommand{\F}{\mathcal{F}}
\newcommand{\Fb}{\mathbb{F}}
\newcommand{\T}{\mathcal{T}}
\newcommand{\e}{\varepsilon}
\newcommand{\V}{\mathcal{V}}
\newcommand{\Vp}{{^p\mathcal{V}}}
\newcommand{\Vo}{\mathcal{V}_0}
\newcommand{\Vc}{\mathcal{V}^1}
\newcommand{\M}{\mathcal{M}}
\newcommand{\ind}[1]{\mathbbm{1}_{#1}}
\renewcommand{\Pr}{P}
\newcommand{\E}{E}
\newcommand{\rs}{\mbox{\rm RBSDE}}
\newcommand{\lrs}{\underline{\mbox{\rm RBSDE}}}
\newcommand{\urs}{\overline{\mbox{\rm RBSDE}}}
\DeclareMathAccent{\wtilde}{\mathord}{largesymbols}{"65}
\newcommand{\wideutilde}[1]{\underaccent{\wtilde}{#1}}
\DeclareMathOperator*{\esssup}{ess\,sup}
\DeclareMathOperator*{\essinf}{ess\,inf}
\newcommand{\cadlag}{c\`adl\`ag }
\theoremstyle{definition}
\newtheorem{example}{Example}[section]
\newtheorem{definition}{Definition}[section]
\newtheorem{remark}{Remark}[section]
\theoremstyle{plain}
\newtheorem{theorem}{Theorem}[section]
\newtheorem{lemma}{Lemma}[section]
\newtheorem{corollary}{Corollary}[section]
\numberwithin{equation}{section}
\title{Reflected BSDEs with general filtration and two separated
barriers}
\author{Mateusz Topolewski}
\affil{Faculty of mathematics and Computer Science, Nicolaus
Copernicus University, Chopina 12/18, 87--100 Toruń, Poland.\\ {E-mail: \tt woland@mat.umk.pl}}
\begin{document}
\date{}
\maketitle
\begin{abstract}
We consider reflected backward stochastic differential equations,
with two barriers,  defined on probability spaces equipped with
filtration satisfying only the usual assumptions of right
continuity and completeness. As for barriers we assume that there
are \cadlag processes of class D that are completely separated. We
prove the existence and uniqueness of solutions for integrable
final condition and integrable monotone generator. An application
to zero-sum Dynkin game is given.
\end{abstract}

{\noindent\it MSC:} Primary 60H10; Secondary 60H30, 60H99.

\vspace{5pt}

{\noindent\it Keywords:} Reflected BSDE; general filtration;
separated barriers; $L^1$ data

\section{Introduction and notation}

In this paper we study the problem of existence and uniqueness of
solutions of backward stochastic differential equations with two
reflecting c\'adl\'ag barriers $L,U$. The main new feature is that
we deal with equations on probability spaces with general
filtration $\mathbb{F}=\{\F_t;t\in[0,T]\}$ satisfying only the
usual conditions of right-continuity and completeness, and we do
not assume that the barriers satisfy  so-called Mokobodzki
condition.  Instead, we assume that the lower barrier $L$ and the
upper barrier $U$ are completely separated in the sense that
$L_t<U_t$ and $L_{t-}<U_{t-}$ for $t\in [0,T]$. Moreover, we
consider equations with $L^p$ data, where $p\in[1,2]$. Our
motivation for considering such general setting comes from PDEs
theory (equations involving nonlocal operators, see \cite{kliroz,
kliroz2013}) and from the theory of optimal stopping (Dynkin
games, see \cite{klimsiak2014,Lepeltier2007,Stet1982,Zab1984}).

Let $T>0$. Suppose we are given an $\F_T$--measurable random
variable $\xi$, a~progressively measurable function
$f:\Omega\times[0,T]\times\R\to\R$ and two  adapted \cadlag
processes $L,U$  such that $L_t\leq U_t$, $t\in[0,T]$. Roughly
speaking, by a solution of the reflected BSDE with terminal
condition $\xi$, generator $f$ and barriers $L,U$ we mean a
quadruple $(Y,K,A,M)$ of \cadlag adapted processes such that $Y$
is  of Doob's class D, $K,A$ are increasing processes such that
$K_0=A_0=0$, $M$ is a~local martingale with $M_0=0$, and a.s.  we
have
\begin{equation}
\label{eq1.1} \left\{
\begin{array}{l}
Y_t=\xi+\int_t^Tf(s,Y_s)\,ds +\int_t^T dK_s
- \int_t^T dA_s -\int_t^TdM_s, \quad t\in[0,T]\smallskip\\
L_t\leq Y_t\leq U_t,\quad t\in[0,T],
\smallskip\\
\int_0^T (Y_{t-}-L_{t-})\,dK_t=\int_0^T(U_{t-}-Y_{t-})\,dA_t=0.
\end{array}
\right.
\end{equation}

In most papers devoted to reflected BSDEs with two barriers it is
assumed that $L,U$ satisfy one of the following conditions:
\begin{itemize}
\item[(a)]between $L$ and $U$ one can find a process $X$ such that
$X$ is a difference of  nonnegative c\`adl\`ag supermartinagles
(so-called Mokobodzki condition), or
\item[(b)]$L_t<U_t$ and
$L_{t-}<U_{t-}$ for $t\in[0,T]$ (i.e. the  barriers are completely
separated).
\end{itemize}

Problem (\ref{eq1.1}) under  assumption (a) is studied thoroughly
in Klimsiak \cite{klimsiak2014}. Among other things,  in
\cite{klimsiak2014} it is proved that if $f$ is continuous and
monotone with respect to $y$ and satisfies mild integrability
conditions (see hypotheses (H1)--(H4) in Section \ref{sec2}), then
there exists a unique solution of (\ref{eq1.1}).

A drawback to  assumption (a), and one of the main reason why more
explicit condition (b) is considered, is that (a)  may be
sometimes difficult to check. Unfortunately, equations with
barriers satisfying (b) are more difficult to deal with. At
present,  all the existing results on equations with barriers
satisfying (b) concern the case where the underlying filtration is
Brownian (see Hamad\`ene and Hassani \cite{hamhas}, Hamad\`ene,
Hassani and Ouknine \cite{hamhasou}) or is generated by a Brownian
motion and an independent Poisson random measure (see Hamad\`ene
and Wang \cite{hamadenewang}). Moreover, in
\cite{hamhas,hamhasou,hamadenewang}) it is assumed that $f$ is
Lipschitz continuous and the data (including barriers) are
$L^2$-integrable. Recently, in \cite{Hass2016}, in the case of Brownian
filtration, an existence and uniqueness result was proved for
equations with separated continuous barriers, $L^1$ data and
Lipschitz continuous generator.

Our main theorem says that under the assumptions on $\xi,f$ from
\cite{klimsiak2014} and \cadlag barriers $L,U$ satisfying (b) and
such that $L^+,U^-$ are of class D there exists\break a~unique solution
of (\ref{eq1.1}). Thus we extend the results from
\cite{klimsiak2014} to barriers satisfying (b) and at the same
time we generalize the results of
\cite{hamhas,hamhasou,hamadenewang,Hass2016} to equations with general
filtration and less regular data. 
It is worth pointing out that as a
simple corollary to our existence result (it suffices to consider
the generator $f\equiv0$) one gets  the following result from the
general theory of stochastic processes: if two \cadlag processes
$L,U$ are completely separated and $L^+,U^-$ are of class D, then
there exists a semimartingale of class D between $L$ and $U$.

The main idea of the proof of our main result is to reduce the
problem with  completely separated barriers to the problem with
barriers satisfying the Mokobodzki condition, and then apply the
results of \cite{klimsiak2014}. Such a reduction is possible
locally (we use here some modification of a construction from
\cite{FalkRozprawaEN}) and enables us to obtain solutions of
(\ref{eq1.1}) on stochastic intervals of the form $[0,\tau_n]$,
where $\{\tau_n\}$ is some stationary sequence of stopping times.
These local solutions can be put together to get the solution of
(\ref{eq1.1}) on $[0,T]$. The last step involves some
technicalities, but in general our proof is short and rather
simple. In our opinion, it is much simpler than the proof for
equations with the underlying  Brownian-Poison filtration and
$L^2$ data  given in \cite{hamadenewang}.

The paper is organized as follows. In Section \ref{sec2} we review
some results from \cite{klimsiak2014} concerning reflected BSDEs
with one barrier. The proof of the main result is given in Section
\ref{sec3}.  Finally, in  Section \ref{sec4} we give an
application of  results of Section \ref{sec3} to zero-sum Dynkin
game with payoff function determined by $\xi,f$ and $L,U$.
\medskip\\
{\bf Notation.} Let  $T>0$, and let
$(\Omega,\F,\Fb=\{\F_t\}_{t\in[0,T]},\Pr)$  be a filtered
probability space with filtration satisfying  the usual
assumptions of completeness and right continuity. By $\T$  we
denote the set of all $\Fb$--stopping times such that $\tau\leq
T$, and by $\T_t$, $t\in[0,T]$, the set of $\tau\in\T$ such that
$\Pr(\tau\geq t)=1$.

By $\V$ we denote the set of all $\Fb$-progressively measurable
processes of finite variation, and by $\Vc$ the subset of $\V$
consisting of all processes $V$ such that $\E|V|_T<\infty$, where
$|V|_T$ stands for the variation of $V$ on $[0,T]$. $\Vo$ is  the
subset of $\V$ consisting of all  processes $V$ such that $V_0=0$,
$\Vo^+$ (resp. $\Vp_0^+$) is the subset of $\Vo$ of all increasing
processes (resp. predictable increasing processes). $\M$ (resp.
$\M_{loc}$) denotes the set of all $\Fb$-martingales (resp. local
martingales). By $L^1(\Fb)$ we denote the space of all
$\Fb$-progressively measurable processes $X$ such that $\E\int_0^T
|X_t|dt<\infty$, and by $L^{1}(\F_T)$ the  space of all
$\F_T$-measurable random variables $\xi$ such that
$\E|\xi|<\infty$.

For a stochastic process $X$ we set $X^+=X\vee 0$, $X^-=-(X\wedge
0)$ and $X_{t-}=\lim_{s\nearrow t}X_s$ with the convention that
$X_{0-}=X_0.$  We also adopt the convention that
$\int_a^b=\int_{(a,b]}$.

\section{BSDEs with one reflecting barrier}
\label{sec2}

In what follows $\xi$ is an $\F_T$-measurable random variable,
and $L,U$ are $\Fb$-progressively measurable \cadlag processes,
$V\in\Vo$ and $f:\Omega\times[0,T]\times\R\to\R$ is a measurable
function such that $f(\cdot,y)$ is an $\Fb$-progressively
measurable process for every $y\in\R$ (for brevity, in our
notation we omit the dependence of $f$ on $\omega\in\Omega)$.

We will need the following assumptions on $\xi$ and $f$:
\begin{itemize}
\item[(H1)] There exists a constant $\mu\in\R$ such that for almost
every $t\in[0,T]$ and all $y,y'\in\R$,
\[
(f(t,y)-f(t,y'))(y-y')\leq \mu|y-y'|^2,
\]
\item[(H2)] $[0,T]\ni t\mapsto f(t,y)\in L^1(0,T)$ for every $y\in\R$,

\item[(H3)] the function $\R\ni y\mapsto f(t,y)$ is continuous  for almost
every $t\in[0,T]$,
\item[(H4)] $\xi\in L^1(\F_T)$, $V\in\Vo\cap\V^1$,
$f(\cdot,0)\in L^1(\F)$.
\end{itemize}

Recall that a stochastic process $X$ on $[0,T]$ is said to be of
class D  if $\{X_\tau:\tau\in\T\}$ is a uniformly integrable
family of random variables.

\begin{definition} We say that a triple $(Y,K,M)$ of \cadlag processes  is
a solution of the reflected BSDE with terminal condition $\xi$,
generator $f+dV$ and lower barrier $L$ ($\lrs(\xi,f+dV,L)$ for
short) if
\begin{itemize}
\item[(a)] $Y$ is a process of class D, $K\in\Vp_0^+$, $M\in\M_{loc}$
with $M_0=0$,
\item[(b)] $L_t\leq Y_t$, $t\in[0,T]$, $\Pr$-a.s.,
\item[(c)] $\int_0^T(Y_{t-}-L_{t-})\,dK_t=0$,
\item[(d)] $Y_t=\xi + \int_t^T f(s,Y_s)\,ds + \int_t^TdV_s + \int_t^T dK_s
- \int_t^T dM_s$, $t\in[0,T]$, $P$-a.s.
\end{itemize}
\end{definition}

\begin{definition}
\label{def2.2} We say that a triple $(Y,K,M)$ of \cadlag processes
is a solution of the reflected BSDE with terminal condition $\xi$,
generator $f+dV$ and upper barrier $U$ ($\urs(\xi,f+dV,U)$ for
short) if
\begin{itemize}
\item[(a)] $Y$ is a process of class D, $A\in\Vp_0^+$, $M\in\M_{loc}$ with $M_0=0$,
\item[(b)] $Y_t\leq U_t$, $t\in[0,T]$, $\Pr$-a.s.,
\item[(c)] $\int_0^T(U_{t-}-Y_{t-})\,dA_t=0$,
\item[(d)] $Y_t=\xi + \int_t^T f(s,Y_s)\,ds + \int_t^TdV_s
- \int_t^T dA_s - \int_t^T dM_s$, $t\in[0,T]$, $P$-a.s.
\end{itemize}
\end{definition}

Our motivations for considering reflected equations involving a
finite variation process $V$  comes from the theory of partial
differential equations with measure data. In these applications
$V$ is an additive functional of a Markov process in the Revuz
correspondence with some smooth measure {see
\cite{kliroz,kliroz2015,kliroz2013}

In the theorem below we recall some results on reflecting BSDEs
with one barrier proved in \cite{klimsiak2014}. They will play
important role in the proof of our main result in Section
\ref{sec3}.

\begin{theorem}\label{tw2.1}
Assume that $L^+,U^-$ are of class \mbox{\rm D} and
\mbox{\rm(H1)--(H4)} are satisfied.
\begin{itemize}
\item[\emph{(i)}] There exists a unique solution
$(\wideutilde{Y},\wideutilde{K},\wideutilde{M})$ of
$\lrs(\xi,f+dV,L)$.\break \mbox{Moreover,} if
$(\wideutilde{Y}^n,\wideutilde{M}^n)$, $n\in\N$, are solutions of
BSDEs of the form
\[
\wideutilde{Y}^n_t=\xi +\int_t^Tf(s,\wideutilde{Y}^n_s)\,ds +
\int_t^T dV_s + \int_t^T n(L_s-\wideutilde{Y}^n_s)^+\,ds -
\int_t^Td\wideutilde{M}^n_s,
\]
then $\wideutilde{Y}^n_t\nearrow \wideutilde{Y}_t$, $t\in[0,T]$
$\Pr$-a.s.
\item[\emph{(ii)}] There exists a unique
solution $(\widetilde{Y},\widetilde{A},\widetilde{M})$ of
$\urs(\xi,f+dV,U)$. Moreover, if
$(\widetilde{Y}^n,\widetilde{M}^n)$, $n\in\N$, are solutions of
BSDEs of the form
\[
\widetilde{Y}^n_t=\xi +\int_t^Tf(s,\widetilde{Y}^n_s)\,ds + \int_t^T
dV_s - \int_t^T n(\widetilde{Y}^n_s-U_s)^+\,ds -
\int_t^Td\widetilde{M}^n_s,
\]
then $\widetilde{Y}^n_t\nearrow \widetilde{Y}_t$, $t\in[0,T]$
$\Pr$-a.s.
\end{itemize}
\end{theorem}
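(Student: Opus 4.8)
The plan is to establish Theorem \ref{tw2.1} by combining a penalization scheme with a priori estimates and the comparison principle for non-reflected BSDEs under (H1)--(H4). I will treat part (i) in detail; part (ii) is entirely symmetric (replace $Y$ by $-Y$, $U$ by $-L$, etc.). First I would recall from the theory of $L^1$-solutions of (non-reflected) BSDEs with monotone generator and class D terminal data (which already underlies the results of \cite{klimsiak2014}) that each penalized equation
\[
\widetilde{Y}^n_t=\xi+\int_t^Tf(s,\widetilde{Y}^n_s)\,ds+\int_t^TdV_s+\int_t^Tn(L_s-\widetilde{Y}^n_s)^+\,ds-\int_t^Td\widetilde{M}^n_s
\]
admits a unique solution $(\widetilde{Y}^n,\widetilde{M}^n)$ with $\widetilde{Y}^n$ of class D; here one absorbs the extra generator term $y\mapsto f(s,y)+n(L_s-y)^+$, which still satisfies a one-sided Lipschitz condition (with constant $\mu$, since $y\mapsto n(L_s-y)^+$ is nonincreasing) and is continuous in $y$, and one uses $L^+\in\mathrm D$ together with (H4) to control integrability of the driver at $y=0$.

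Next I would prove monotonicity in $n$: since $f(s,y)+n(L_s-y)^+\le f(s,y)+(n+1)(L_s-y)^+$ for all $y$, the comparison theorem for BSDEs with monotone generators gives $\widetilde{Y}^n_t\le\widetilde{Y}^{n+1}_t$ for all $t$, a.s. To get a uniform upper bound I would compare $\widetilde{Y}^n$ with the (unique) solution of the \emph{upper}-reflected problem $\urs(\xi,f+dV,U)$ — or, more elementarily, with the solution of the plain BSDE driven by $f+dV+d\overline K$ where $\overline K$ reflects at $U$ — using $L\le U$; this yields $\widetilde{Y}^n_t\le \overline Y_t$ for a process $\overline Y$ of class D, so the pointwise increasing limit $\widetilde Y_t:=\lim_n\widetilde{Y}^n_t$ is finite and dominated by a class D process, and by standard arguments (monotone limits of class D processes bounded above by a class D process, together with uniform $L^1$ bounds on the whole quadruple) $\widetilde Y$ is of class D. I would then derive the key uniform estimates: $\sup_n\E\big(\sup_t|\widetilde{Y}^n_t|\big)<\infty$ and $\sup_n\E\int_0^Tn(L_s-\widetilde{Y}^n_s)^+\,ds<\infty$, the latter being obtained by testing the equation against a suitable function and using that $\widetilde{Y}^n\ge \underline Y$ for the lower-reflected solution (or for the plain BSDE ignoring the barrier), so that $(L-\widetilde{Y}^n)^+$ is controlled.

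With these estimates in hand I would pass to the limit. Set $\widetilde K^n_t:=\int_0^tn(L_s-\widetilde{Y}^n_s)^+\,ds$; the uniform bound on $\E\widetilde K^n_T$ plus monotonicity of $n\mapsto\widetilde{Y}^n$ (which forces, via the equation, a compatible behaviour of $\widetilde M^n$) lets me extract weak/a.s. limits $\widetilde K$, $\widetilde M$ along a subsequence, with $\widetilde K\in\Vp{}_0^+$ (predictability is inherited since each $\widetilde K^n$ is absolutely continuous, hence predictable, and the limit of predictable increasing processes is predictable) and $\widetilde M\in\M_{loc}$, $\widetilde M_0=0$; passing to the limit in equation (d) along a common subsequence and using dominated convergence for the $f$-term (via (H1)--(H3) and the uniform bounds, plus continuity (H3) and the fact that $\widetilde{Y}^n\to\widetilde Y$ pointwise) gives (d) for $(\widetilde Y,\widetilde K,\widetilde M)$. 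The barrier constraint $L\le\widetilde Y$ follows because $n(L_s-\widetilde{Y}^n_s)^+\le \widetilde K^n_T$ stays bounded, forcing $(L_s-\widetilde{Y}^n_s)^+\to0$, i.e. $L_s\le\widetilde Y_s$. The minimality/Skorokhod condition $\int_0^T(\widetilde{Y}_{t-}-L_{t-})\,d\widetilde K_t=0$ is the delicate point and I expect it to be the main obstacle: one shows $\widetilde Y$ is the smallest supersolution above $L$ (equivalently, $\widetilde Y$ has a Snell-envelope representation), and then a standard argument — for instance comparing $\int_0^T(\widetilde{Y}^n_{t-}-L_{t-})^+\,d\widetilde K^n_t$ with $\int_0^T(\widetilde{Y}^n_{t-}-L_{t-})^-\,d\widetilde K^n_t$, noting the penalization only charges $\{\widetilde{Y}^n<L\}$, and passing to the limit using uniform convergence on compacts in probability of $\widetilde{Y}^n$ to $\widetilde Y$ (which upgrades the pointwise convergence via the monotone Dini-type argument) — yields the Skorokhod condition in the limit. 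Finally, uniqueness of the solution follows from the comparison principle for reflected BSDEs: if $(Y,K,M)$ and $(Y',K',M')$ are two solutions, applying the $L^1$-type estimate (Tanaka/Meyer–Itô on $|Y-Y'|$, using (H1) to kill the generator difference up to a $\mu|Y-Y'|$ term handled by Gronwall, and using the Skorokhod conditions to show the $dK$, $dK'$ contributions are $\le 0$) gives $Y=Y'$, and then $K=K'$, $M=M'$ from the equation. Throughout I would cite \cite{klimsiak2014} for the one-barrier $L^1$ BSDE machinery and for the comparison theorem, so the new content here is only the assembly of the penalization limit.
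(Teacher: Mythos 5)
Your overall strategy (penalization, comparison in $n$, a priori bounds, passage to the limit in the equation and in the Skorokhod condition) is essentially the proof of \cite[Theorem 4.1]{klimsiak2014}, which the paper does not reprove but cites; the paper's own argument is a short reduction. It takes the solution $(Y^0,M^0)$ of the plain BSDE, replaces $L$ by $L^\varepsilon=L\vee(Y^0-\varepsilon)$, which \emph{is} of class D as soon as $L^+$ is (because $Y^0$ is of class D), applies the cited theorem to $L^\varepsilon$, and observes that the resulting triple also solves $\lrs(\xi,f+dV,L)$ since $Y^\varepsilon\ge Y^0$ forces $\{Y^\varepsilon_{t-}>L_{t-}\}=\{Y^\varepsilon_{t-}>L^\varepsilon_{t-}\}$, so the minimality condition transfers from $L^\varepsilon$ to $L$. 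The only genuinely new content of Theorem \ref{tw2.1} relative to the citation is the weakening from ``$L$ of class D'' to ``$L^+$ of class D'', and your proposal never isolates or resolves this point.

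Concretely, your uniform upper bound step fails. In part (i) there is no hypothesis $L\le U$ (parts (i) and (ii) are independent one-barrier problems; $U$ plays no role in (i)), so nothing built from $U$ is available. Worse, even granting $L\le U$, the first component of the solution of $\urs(\xi,f+dV,U)$ is the largest \emph{subsolution} below $U$: it is dominated by $Y^0$, hence by every $\wideutilde{Y}^n$, so the comparison you invoke yields a lower bound, not an upper one. The correct majorant --- and the precise place where the class-D hypothesis on the barrier enters --- is a class-D supersolution dominating $L$ (e.g.\ the first component of $\lrs(\xi,0,L)$, or a Snell-envelope construction over $L^+$), and manufacturing it when only $L^+$ is of class D is exactly what the paper's $L^\varepsilon$ device accomplishes. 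Two secondary but real soft spots: monotone pointwise convergence of c\`adl\`ag processes does not upgrade to uniform convergence in probability by a ``Dini-type argument'' (the limit need not be continuous), and the passage to the limit in the Skorokhod condition, which you yourself flag as the delicate point, is left as a sketch; in the general-filtration setting these are precisely the steps requiring the section-theorem machinery of \cite{klimsiak2014}, so they cannot be waved through.
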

\begin{proof}
Part (i) is proved in \cite[Theorem 4.1]{klimsiak2014} under the
assumption that $L$ is of class D. The following argument shows
that in fact it suffices to assume that $L^+$ is of class D. Let
$(Y^0,M^0)$ be a solution of the BSDE
\begin{equation}
\label{eq2.2} Y^0_t=\xi+\int_t^T f(s,Y^0_s)\,ds + \int_t^TdV_s -
\int_t^T dM^0_s,\quad t\in[0,T],
\end{equation}
and let $L^{\varepsilon}=L\vee(Y^0-\varepsilon)$ for some
$\varepsilon>0$. If $L^+$ is of class D then  $L^{\varepsilon}$ is
of class D, because $Y^0$ is of class D. Therefore by
\cite[Theorem 4.1]{klimsiak2014} there exists a solution
$(Y^{\varepsilon},K^{\varepsilon},M^{\varepsilon})$ of
$\lrs(\xi,f+dV,L^\e)$ such that $K^{\varepsilon}\in\Vo^+$. In
particular,
\begin{equation}
\label{eq2.3} Y^{\varepsilon}_t=\xi+\int_t^T
f(s,Y^{\varepsilon}_s)\,ds + \int_t^TdV_s
+\int^T_tdK^{\varepsilon}_s - \int_t^T dM^{\varepsilon}_s,\quad
t\in[0,T]
\end{equation}
and
\begin{equation}
\label{eq2.4} Y^{\varepsilon}\ge L^{\varepsilon}\ge L.
\end{equation}
By  (\ref{eq2.2}), (\ref{eq2.3}) and \cite[Proposition
2.1]{kliroz}, $Y^{\varepsilon}\ge Y^0$. Hence we have
$\ind{\{Y^{\varepsilon}_{t-}>L_{t-}\}}
=\ind{\{Y^{\varepsilon}_{t-}>L^\e_{t-}\}}$ for $t\in[0,T]$, and
consequently
\begin{align}
\label{eq2.5}
\int_0^T(Y^{\varepsilon}_{t-}-L_{t-})\,dK^{\varepsilon}_t&=
\int_0^T(Y^{\varepsilon}_{t-}-L_{t-})
\ind{\{Y^{\varepsilon}_{t-}>L_{t-}\}}(t)\,dK^{\varepsilon}_t\nonumber\\
&=\int_0^T(Y^{\varepsilon}_{t-}-L_{t-})
\ind{\{Y^{\varepsilon}_{t-}>L^\e_{t-}\}}(t)\,dK^{\varepsilon}_t
=0,
\end{align}
the last equality being a consequence of the fact that
$\int_0^T\ind{\{Y^{\varepsilon}_{t-}>L^\e_{t-}\}}(t)
\,dK^{\varepsilon}_t=0$. By (\ref{eq2.3})--(\ref{eq2.5}) the
triple $(\wideutilde{Y},\wideutilde{K},\wideutilde{M})
=(Y^{\varepsilon},K^{\varepsilon},M^{\varepsilon})$ is a solution
of the equation $\lrs(\xi,f+dV,L)$. Uniqueness follows from \cite[Corollary
2.2.]{klimsiak2014}. This proves the first part of (i). Observe
now that the first component of the solution of $\lrs(\xi,0,L)$ is
a supermartingale of class D majorizing $L$. Therefore to prove
that $\wideutilde{Y}^n_t\nearrow \wideutilde{Y}_t$, $t\in[0,T]$,
it suffices to repeat step by step the proof of \cite[Theorem
4.1]{klimsiak2014}. Since the proof of (ii) is analogous to that
of (i), we omit it.
\end{proof}

\section{BSDEs with two reflecting barriers}
\label{sec3}

In this section $\xi,f,V$ and $U,L$ are as in Section \ref{sec2}.
We also assume that $L_t\leq U_t$ for $t\in[0,T]$, $\Pr$-a.s.

\begin{definition}
We say that a quadruple $(Y,K,A,M)$ of \cadlag processes is a
solution  of the reflected BSDE with terminal condition $\xi$,
generator $f+dV$, lower barrier $L$ and upper barrier $U$
($\rs(\xi,f+dV,L,U)$ for short) if
\begin{itemize}
\item[(LU1)] $Y$ is a process of class D, $A,K\in\Vp_0^+$,
$M\in\M_{loc}$ with $M_0=0$,
\item[(LU2)] $L_t\leq Y_t\leq U_t$, $t\in[0,T]$, $\Pr$-a.s.,
\item[(LU3)] $\int_0^T(Y_{t-}-L_{t-})\,dK_t=\int_0^T
(U_{t-}-Y_{t-})\,dA_t=0$,
\item[(LU4)]
$Y_t=\xi+ \int_t^T f(s,Y_s)\,ds + \int_t^T dV_s +\int_t^T d(K_s
-A_s)- \int_t^T dM_s$, $t\in[0,T]$, $P$-a.s.
\end{itemize}
\end{definition}

We will need the following conditions for the barriers $L,U$:
\begin{itemize}
\item[(B1)] $L_t<U_t$ and $L_{t-}<U_{t-}$ for $t\in[0,T]$.
\item[(B2)] $L^+,U^-$ are  processes of class D.
\end{itemize}

A sequence $\{\tau_n\}\subset\T$ is called of stationary type, if
\[
\Pr\Big(\liminf_{n\to\infty}\{\tau_n=T\}\Big)=1.
\]
The following lemma is an extension of  \cite[Remark
3.4]{FalkRozprawaEN}.

\begin{lemma}\label{lm3.1}
Assume that $L,U$ are of class \mbox{\rm D} and satisfy
\mbox{\rm(B1)}. Then there exists a process $H\in\V$ such that
$L_t\leq H_t\leq U_t$, $t\in[0,T]$, $\Pr$-a.s. Moreover, there
exists a sequence $\{\tau_n\}\subset\T$ of stationary type  such
that $\E{|H|_{\tau_n}}<\infty$ for every $n\in\N$.
\end{lemma}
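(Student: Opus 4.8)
The plan is to construct $H$ by hand as a lazy, piecewise constant process that sits in the middle of the corridor $[L,U]$ and is repositioned only when one of the barriers comes close to it; condition \mbox{\rm(B1)} will guarantee that this repositioning happens only finitely often on $[0,T]$. The first step is to record the \emph{pathwise} separation:
\[
\delta:=\inf_{t\in[0,T]}(U_t-L_t)\wedge\inf_{t\in[0,T]}(U_{t-}-L_{t-})>0\quad\Pr\text{-a.s.}
\]
Indeed, $D:=U-L$ is \cadlag and, by \mbox{\rm(B1)}, $D_t>0$ and $D_{t-}>0$ for every $t$; if the infimum over the compact interval $[0,T]$ vanished, a minimizing sequence would have a subsequence converging to some $t^*$, and passing to the limit (using right continuity if the approach is from the right, the existence of the left limit if it is from the left, or trivially if the sequence is eventually $t^*$) the infimum would equal $D_{t^*}$ or $D_{t^*-}$, both positive — a contradiction.

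Next I would build $H$ recursively along a sequence of stopping times. Put $\rho_0:=0$, $c_0:=\tfrac12(L_0+U_0)$, and, given a stopping time $\rho_k$ with $\rho_k<T$, set $m_k:=\tfrac12(U_{\rho_k}-L_{\rho_k})$ (so $c_k-L_{\rho_k}=U_{\rho_k}-c_k=m_k\ge\delta/2$) and
\[
\rho_{k+1}:=\inf\{t>\rho_k:\ L_t>c_k-m_k/2\ \text{ or }\ U_t<c_k+m_k/2\}\wedge T,\qquad c_{k+1}:=\tfrac12(L_{\rho_{k+1}}+U_{\rho_{k+1}});
\]
define $H_t:=c_k$ on $[\rho_k,\rho_{k+1})$ and $H_T:=\tfrac12(L_T+U_T)$. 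Each $\rho_{k+1}$ is, after $\rho_k$, the hitting time of an open set by a \cadlag adapted process, hence a stopping time, and right continuity of $L,U$ together with $m_k>0$ gives $\rho_{k+1}>\rho_k$ on $\{\rho_k<T\}$. For $t\in(\rho_k,\rho_{k+1})$ the defining inequalities fail, so $L_t\le c_k-m_k/2<c_k<c_k+m_k/2\le U_t$; since also $c_k\in[L_{\rho_k},U_{\rho_k}]$ and $H_T\in[L_T,U_T]$, one gets $L_t\le H_t\le U_t$ for every $t\in[0,T]$.

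The key step is to show that the reset times stabilize at $T$. On $\{\rho_k<T\}$ the two barriers are at distance $m_k\ge\delta/2$ from $c_k$ at time $\rho_k$, while by definition of $\rho_{k+1}$ one of them has come within $m_k/2$ of $c_k$ by time $\rho_{k+1}$; hence on $[\rho_k,\rho_{k+1}]$ either $L$ or $U$ oscillates by at least $m_k/2\ge\delta/4$. If infinitely many $\rho_k$ were $<T$, being strictly increasing they would converge to some $\rho^*\le T$ from the left, producing infinitely many disjoint subintervals of $[0,\rho^*)$ on each of which $L$ or $U$ oscillates by at least $\delta/4$ — impossible, since the left limits $L_{\rho^*-},U_{\rho^*-}$ exist. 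Thus $\Pr$-a.s. $\rho_N=T$ for some random $N$, so $\{\rho_n\}\subset\T$ is of stationary type, $H$ has only finitely many jumps on $[0,T]$, and hence $|H|_T<\infty$ a.s., i.e.\ $H\in\V$.

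Finally I would take $\tau_n:=\rho_n$ and estimate the variation. Since $L$ and $U$ are of class \mbox{\rm D}, the families $\{L_\tau:\tau\in\T\}$, $\{U_\tau:\tau\in\T\}$ are uniformly integrable, hence bounded in $L^1$; with $C:=\sup_{\tau\in\T}\E|L_\tau|+\sup_{\tau\in\T}\E|U_\tau|<\infty$ and $|\Delta H_{\rho_k}|=|c_k-c_{k-1}|\le\tfrac12(|L_{\rho_k}|+|U_{\rho_k}|+|L_{\rho_{k-1}}|+|U_{\rho_{k-1}}|)$ one obtains $\E|\Delta H_{\rho_k}|\le C$, and since $H$ jumps only at the $\rho_k$,
\[
\E|H|_{\rho_n}\le\sum_{k=1}^{n}\E|c_k-c_{k-1}|\le nC<\infty .
\]
I expect the third step (termination of the reset procedure) to be the only genuine obstacle; verifying measurability of the $\rho_k$, the behaviour at $t=T$, and the $L^1$ bounds is routine. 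This is in the spirit of the construction in \cite[Remark 3.4]{FalkRozprawaEN}, adapted to the present two-barrier situation.
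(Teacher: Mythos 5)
Your proof is correct and follows essentially the same route as the paper: a piecewise-constant process equal to the midpoint $(L+U)/2$, reset at a recursively defined sequence of stopping times whose stationarity is forced by (B1), with the class D property of $L,U$ giving $\E|H|_{\tau_n}<\infty$ term by term. The only cosmetic differences are your preliminary uniform-separation estimate $\delta>0$ and the oscillation-counting termination argument, where the paper instead lets the midpoints converge and derives $L_{\tau-}\ge U_{\tau-}$ directly, contradicting (B1).
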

\begin{proof}
Let $\tau_0=0$, and for $n\in\N$ set
\[
\tau_n=\inf\Big\{t>\tau_{n-1}
:\frac{L_{\tau_{n-1}}+U_{\tau_{n-1}}}{2}>U_t \quad\text{or}\quad
\frac{L_{\tau_{n-1}} +U_{\tau_{n-1}}}{2}<L_t \Big\}\wedge T.
\]
Obviously $\{\tau_n\}$ is nondecreasing. We shall show that  it is
increasing up to $T$. To see this, we first observe that
\begin{equation}
\label{eq3.1} \Pr(\tau_{n}=\tau_{n+1}<T)=0, \quad n\in\N\cup\{0\}.
\end{equation}
Indeed, suppose that  $\omega \in \{\tau_{n}=\tau_{n+1}<T\}$. Then
there exists a sequence $\{t_m\}$ such that $t_m\searrow
\tau_n(\omega)$ and for every $m\in\N$,
\[
\frac{L_{\tau_n(\omega)}(\omega)
+U_{\tau_n(\omega)}(\omega)}{2}>U_{t_m}(\omega)
\quad\text{or}\quad\frac{L_{\tau_n(\omega)}(\omega)
+U_{\tau_n(\omega)}(\omega)}{2}< L_{t_m}(\omega).
\]
Since $L$ and $U$ are right-continuous, this implies that
\[
\frac{L_{\tau_n(\omega)}(\omega)+U_{\tau_n(\omega)}(\omega)}{2}
\geq U_{\tau_n(\omega)}(\omega)
\]
or
\[
\frac{L_{\tau_n(\omega)}(\omega)
+U_{\tau_n(\omega)}(\omega)}{2}\leq L_{\tau_n(\omega)}(\omega).
\]
Hence  $L_{\tau_n(\omega)}(\omega) =U_{\tau_n(\omega)}(\omega)$.
Since the barriers satisfy (B1), this shows (\ref{eq3.1}). We can
now prove that $\{\tau_n\}$ is of stationary type. Suppose, for
contradiction, that there is $\tau\in\T$ such that
$\tau_n\nearrow\tau$ and
$\Pr(\bigcap_{n=1}^\infty\{\tau_n<\tau\})>0$. Then
\[
\Pr\left(\frac{L_{\tau_{n-1}}+U_{\tau_{n-1}}}{2} \to
\frac{L_{\tau-}+U_{\tau-}}{2}\right)>0.
\]
This implies that $\Pr\big(L_{\tau-}
\geq\frac{L_{\tau-}+U_{\tau-}}{2}\geq U_{\tau-}\big)>0$, hence
that $P(L_{\tau-}\ge U_{\tau-})>0$, in contradiction with (B1).
Thus $\{\tau_n\}$ is of stationary type. Set
\[
H_t=\sum_{n=1}^\infty\frac{L_{\tau_{n-1}}+U_{\tau_{n-1}}}{2}
\ind{[\tau_{n-1},\tau_n)}(t),\quad t\in[0,T].
\]
Then $L_t\leq H_t\leq U_t$, $t\in[0,T]$, $\Pr$-a.e., and $H\in\V$,
because $\{\tau_n\}$ is of stationary type. Moreover, for each
$n\in\N$,
\[
\E|H|_{\tau_n}=\sum_{k=1}^{n}\E\left|\frac{U_{\tau_k}+L_{\tau_k}}{2}
- \frac{U_{\tau_{k-1}}+L_{\tau_{k-1}}}{2} \right|,
\]
which is finite because $L,U$ are of class D.
\end{proof}

The following example shows that in general there is no $H$
between barriers such that $\E{|H|_T}$ is finite.

\begin{example}\label{ex3.3}
Let $T=1$ and $\Fb=\{\F_t\}_{t\in[0,1]}$ be a Brownian filtration. Let $\{B_n\}_{n\in\N}$ be a partition of $\Omega$ such that $B_n$ is $\F_\frac{1}{4}$\,-measurable and $\Pr(B_n)=Cn^{-2}$ with $C=6\pi^{-2}$, $n\in\N$. Define ${h:[0,1)\to\R}$ by the formula
\[
h_t=\begin{cases} \frac{1}{2}, &
t\in[1-\frac{1}{2n+1},1-\frac{1}{2n+2}),\quad
n\in\N\cup\{0\},\smallskip\\
-\frac{3}{2}, & t\in[1-\frac{1}{2n},1-\frac{1}{2n+1}),\quad
n\in\N,
\end{cases}
\]
and put
\[
L_t=\sum_{n=1}^\infty h_{t\wedge(1-\frac{1}{n+1})}\ind{B_n},\quad
U_t=L_t+1, \quad t\in[0,T].
\]
One can check that $L,U$ satisfy the assumptions of Lemma
\ref{lm3.1}. Therefore there exists a process $H\in\V$ such that
$L_t\leq H_t\leq U_t$, $t\in[0,T]$, $\Pr$-a.s.
Consider now an arbitrary process $\bar H\in\V$ such that
$L_t\leq\bar H_t\leq U_t$, $t\in[0,T]$, $\Pr$-a.s. By the
construction of the barriers $L$ and $U$,
\[
|\bar
H|_T\ind{B_n}\geq\sum_{t\in[0,T]}(|U_t-L_{t-}|\wedge|U_{t-}-L_t|)
\ind{\{L_t-L_{t-}\neq 0\}}(t)\ind{B_n}=n\ind{B_n}.
\]
Hence
\[
\E{|\bar H|_T}=\sum_{n=1}^\infty \E{|\bar H|_T\ind{B_n}}\geq
\sum_{n=1}^\infty n\Pr(B_n)=\sum_{n=1}^\infty\frac{C}{n}=\infty.
\]
\end{example}
\medskip

Before proving our main result, we first introduce some additional
notation. Assume that $\xi,f$ satisfy (H1)--(H4), and $L,U$ are of
class D and satisfy (B1). Set
\[
\underline{f}_m(t,y)=f(t,y)-m(y-U_t)^+,\qquad
\overline{f}_n(t,y)=f(t,y)+n(L_t-y)^+.
\]
Then $\underline{f}_m,\overline{f}^n$ also satisfy (H1)--(H4),
because $y\mapsto n(L_t-y)^+$ and $y\mapsto m(y-U_t)^+$ are
Lipschitz continuous for $t\in[0,T]$ and $L,U$ are of class D. By
Theorem \ref{tw2.1}, for each $n\in\N$ there exists a unique
solution $(\overline{Y}^n,\overline{A}^n,\overline{M}^n)$ of
the equation $\urs(\xi,\overline{f}_n+dV,U)$, and for each $m\in N$ there
exists a unique solution
$(\underline{Y}^m,\underline{K}^m,\underline{M}^m)$ of
$\lrs(\xi,\underline{f}_m+dV,L)$. Therefore
\begin{align}
\label{eq3.16} \overline{Y}^n_t&=\xi+\int_t^T
f(s,\overline{Y}^n_s)\,ds +\int_t^TdV_s\nonumber \\
&\quad +\int_t^T n(L_s-\overline{Y}^n_s)^+\,ds - \int_t^T
d\overline{A}^n_s - \int_t^T d\overline{M}^n_s\leq U_t
\end{align}
and
\begin{align*}
\underline{Y}^m_t&=\xi+\int_t^T f(s,\underline{Y}^m_s)\,ds +
\int_t^TdV_s\\
&\quad -\int_t^T m(\underline{Y}^m_s-U_s)^+\,ds + \int_t^T
d\underline{K}^m_s - \int_t^T d\underline{M}^m_s\geq L_t
\end{align*}
for $t\in[0,T]$. The function $(t,y)\mapsto f(t,y)-m(y-U_t)^+
+n(L_s-y)^+$ also satisfies (H1)--(H4), so by \cite[Theorem
2.7]{klimsiak2014}, for any $n,m\in\N$  there exists a solution
$(Y^{n,m},M^{n,m})$ of the BSDE
\begin{align*}
{Y}^{n,m}_t&=\xi+\int_t^T f(s,Y^{n,m}_s)\,ds + \int_t^TdV_s
+\int_t^T n(L_s-{Y}^{n,m}_s)^+\,ds\\
&\quad -\int_t^T m({Y}^{n,m}_s-U_s)^+\,ds - \int_t^T d{M}^{n,m}_s,
\quad t\in[0,T].
\end{align*}
By Theorem \ref{tw2.1}, for each $m\in N$  the sequence
$\{Y^{n,m}\}_n$ is nondecreasing, for each $n\in\N$ the sequence
$\{Y^{n,m}\}_m$ is nonincreasing, and
\[
\underline{Y}^m_t=\sup_{n\in\N}{Y}^{n,m}_t =
\lim_{n\rightarrow\infty}{Y}^{n,m}_t,\quad
\overline{Y}^n_t=\inf_{m\in\N}{Y}^{n,m}_t
=\lim_{m\rightarrow\infty}{Y}^{n,m}_t,\quad t\in[0,T].
\]
In particular,  for all $n,m\in\N$ we have
\begin{equation}\label{eq3.2}
\overline{Y}^n_t\leq Y^{n,m}_t\leq\underline{Y}^m_t,\quad
t\in[0,T].
\end{equation}
By \cite[Proposition 2.1]{klimsiak2014}  the sequence
$\{\overline{Y}^n\}$ is nondecreasing, whereas the sequence
$\{\underline{Y}^m\}$ is nonincreasing. Set
\begin{equation}\label{eq3.03}
\underline{Y}_t=\inf_{m\in\N} \underline{Y}^m_t  =
\lim_{m\rightarrow\infty} \underline{Y}^m_t,\qquad
\overline{Y}_t=\sup_{n\in\N}\overline{Y}^n_t
=\lim_{n\rightarrow\infty}\overline{Y}^n_t.
\end{equation}
Since $\underline{Y}^m\ge L$ for all $m\in\N$ and
$\overline{Y}^n\le U$ for all $n\in\N$, we have
\[
\underline{Y}_t\geq L_t,\quad \overline{Y}_t\leq U_t,\quad
t\in[0,T],\quad P\mbox{-a.s.}
\]
Also note that from \eqref{eq3.2} and \eqref{eq3.03} and
monotonicity of the sequences $\{\overline{Y}^n\}$,
$\{\underline{Y}^m\}$ it follows that
\begin{equation}
\label{eq3.04} \overline{Y}^0\leq \overline{Y}^n\leq\overline{Y}
\leq\underline{Y}\leq\underline{Y}^m\leq\underline{Y}^0.
\end{equation}
Since $\overline{Y}^0$ and $\underline{Y}^0$ are solution of
reflected BSDEs, they are processes of class D.
\begin{lemma}\label{lm3.3}
Assume \emph{(H1), (H2)}. Then for every $r>0$
\[
t\mapsto \sup_{|y|\leq r} f(t,y) \in L^1(0,T).
\]
\end{lemma}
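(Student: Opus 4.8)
The plan is to exploit the fact that hypothesis (H1) forces, for each fixed $t$, the map $y\mapsto f(t,y)$ to be a nonincreasing function plus a linear one. This single observation yields simultaneously an explicit domination of $\sup_{|y|\le r}f(t,y)$ by an $L^1(0,T)$ function and the measurability of that supremum in $t$ (note that no continuity in $y$ is being assumed here).

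First I would put $g_t(y)=f(t,y)-\mu y$ and rewrite (H1) as $(g_t(y)-g_t(y'))(y-y')\le 0$ for all $y,y'\in\R$, i.e.\ $g_t$ is nonincreasing. Hence, for $y\ge y'$, $f(t,y)\le f(t,y')+\mu(y-y')$; choosing $y'=-r$ gives, for every $y\in[-r,r]$,
\[
f(t,y)\le f(t,-r)+\mu(y+r)\le |f(t,-r)|+2|\mu|r ,
\]
since $0\le y+r\le 2r$. On the other hand $\sup_{|y|\le r}f(t,y)\ge f(t,0)\ge-|f(t,0)|$, so
\[
\Big|\sup_{|y|\le r}f(t,y)\Big|\le |f(t,-r)|+|f(t,0)|+2|\mu|r ,
\]
and the right-hand side lies in $L^1(0,T)$ by (H2) together with the triviality of the constant term.

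It then remains only to justify that $t\mapsto\sup_{|y|\le r}f(t,y)$ is (progressively) measurable. Being monotone, $g_t$ has a finite left-hand limit at every point, hence so does $f(t,\cdot)$, and $f(t,y-)\ge f(t,y)$ for all $y$. Fixing a countable dense set $D\subset[-r,r]$ containing $-r$ and $r$, for $y\in(-r,r)$ we obtain $f(t,y)\le f(t,y-)=\lim_{q\uparrow y,\,q\in D}f(t,q)\le\sup_{q\in D}f(t,q)$, while for $y\in\{-r,r\}\subset D$ the inequality is trivial; consequently $\sup_{|y|\le r}f(t,y)=\sup_{q\in D}f(t,q)$, a countable supremum of progressively measurable processes. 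Combining this with the bound above gives the claim.

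I do not anticipate any real difficulty. The only mildly delicate point is the measurability of the supremum, which is precisely what the nonincreasing-plus-linear decomposition of $f(t,\cdot)$ extracted from (H1) takes care of; everything else is an elementary estimate plus (H2).
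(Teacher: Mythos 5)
Your proof is correct and follows essentially the same route as the paper: hypothesis (H1) is used to dominate $f(t,\cdot)$ on $[-r,r]$ by its values at the endpoints plus a linear correction of size $2|\mu|r$, and then (H2) gives integrability. The one genuine addition is your reduction of the supremum to a countable one via the ``nonincreasing plus linear'' decomposition, which settles the measurability of $t\mapsto\sup_{|y|\le r}f(t,y)$ without invoking (H3) --- a point the paper's proof leaves implicit.
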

\begin{proof}
By (H1), for all $y\in[-r,r]$, $t\in[0,T]$ we have
\[
f(t,y)\geq f(t,r)-2\mu r,\qquad f(t,y)\leq f(t,-r)+2\mu r.
\]
Hence
\[
\sup_{|y|\leq r}|f(t,y)| \leq |f(t,-r)+2\mu r| \vee |f(t,r)-2\mu r|.
\]
It suffices to use (H2) to complete the proof.
\end{proof}

\begin{lemma}\label{lm3.4}
Let $(Y,K,A,M)$ be a solution of $\rs(\xi,f+dV,L,U)$, and let
$\tau\in\T.$ If $\xi\in L^1(\F_\tau),$ $f(t,y)\ind{(\tau,T]}(t)=0$ for all $y\in\R$ and
$t\in[0,T]$, and
\begin{equation}\label{eq3.01}
V_t=V_{t\wedge\tau},\quad L_t=L_{t\wedge\tau}, \quad
U_t=U_{t\wedge\tau},\quad t\in[0,T],
\end{equation}
then
\begin{equation}
\label{eq3.07} Y_t=Y_{t\wedge\tau},\quad K_t=K_{t\wedge\tau},\quad A_t=A_{t\wedge\tau},\quad
M_t=M_{t\wedge\tau},\quad t\in[0,T].
\end{equation}
\end{lemma}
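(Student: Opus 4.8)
The plan is to show that after time $\tau$ nothing moves, by exploiting the minimality conditions (LU3) together with the fact that the driver, the finite-variation process, and both barriers are frozen at $\tau$. First I would consider the process $Y$ restricted to the stochastic interval $(\tau,T]$. On this interval equation (LU4) reads $Y_t = \xi + \int_t^T d(K_s-A_s) - \int_t^T dM_s$ for $t \ge \tau$, since $f(s,Y_s)=0$ and $dV_s=0$ there; in particular $Y_\tau = \xi + (K_T-A_T) - (K_\tau - A_\tau) - (M_T - M_\tau)$ and more generally $Y_t - Y_\tau = -(K_t - K_\tau) + (A_t - A_\tau) + (M_t - M_\tau)$ for $t\ge\tau$. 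Thus on $(\tau,T]$ the process $Y$ is a semimartingale whose finite-variation part is $-(K-K_\tau)+(A-A_\tau)$ and whose martingale part is $M-M_\tau$.

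The key step is to prove that $K_t = K_\tau$ and $A_t = A_\tau$ for $t \ge \tau$. The idea is that after $\tau$ the barriers are constant, $L_t = L_\tau$ and $U_t = U_\tau$, and by (B1)-type separation (or just $L\le U$ together with the minimality conditions) $Y$ cannot simultaneously be pushed up off $L$ and down off $U$; since both barriers have stopped, neither $K$ nor $A$ has any reason to increase. To make this rigorous I would run a standard Itô/Tanaka or integration-by-parts argument on $(\tau,T]$: compute $d(Y_t - Y_\tau)^2$, or more simply test against the nonnegative semimartingale $U_t - Y_t \ge 0$ and $Y_t - L_t \ge 0$. Using $\int_\tau^T (Y_{t-}-L_{t-})\,dK_t = 0$ and $\int_\tau^T (U_{t-}-Y_{t-})\,dA_t = 0$ (these follow from (LU3) because the integrands of $dK$ live where $Y=L$ and of $dA$ where $Y=U$), one gets that the Stieltjes integral $\int_\tau^T (U_t - L_t)\,d(K_t+A_t)$ controls the total mass of $K$ and $A$ on $(\tau,T]$ and must vanish; alternatively, apply the comparison/a priori estimate from \cite{klimsiak2014} to the reflected BSDE on $[\tau,T]$ with data $(\xi, 0, L_\tau, U_\tau)$, for which the unique solution is clearly the constant $Y\equiv\xi$ (note $L_\tau\le\xi=Y_\tau\le U_\tau$ after the obvious check) with $K=A=M=0$ on that interval, and invoke uniqueness. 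Either way one concludes $K_t - K_\tau = A_t - A_\tau = 0$ for $t\in[0,T]$, hence by (LU4) also $M_t - M_\tau = Y_\tau - Y_t$, and since $M$ is a local martingale with vanishing finite-variation part after $\tau$ while $Y_t - Y_\tau$ is adapted and the right-continuous modification forces $M_t = M_\tau$; then $Y_t = Y_\tau$ as well.

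The main obstacle I anticipate is making the "$Y$ cannot leave both barriers" argument airtight at the level of the predictable increasing processes $K,A$ rather than just heuristically: one must handle the possibility of jumps at $\tau$ and after, and be careful that $Y_\tau$ actually lies in $[L_\tau, U_\tau]$ (which follows from (LU2)) so that the constant process is an admissible solution on $[\tau,T]$. The cleanest route is probably the uniqueness route: verify that $(\,Y_{t\wedge\tau}, K_{t\wedge\tau}, A_{t\wedge\tau}, M_{t\wedge\tau}\,)$ and $(Y,K,A,M)$ both solve $\rs(\xi, f+dV, L, U)$ — the former trivially satisfies (LU1)–(LU4) given (\ref{eq3.01}) and $\xi\in L^1(\F_\tau)$ — and then quote the uniqueness part of the theory (\cite[Corollary 2.2]{klimsiak2014} extended to two barriers, or the main uniqueness statement of the present paper) to identify them, which immediately gives (\ref{eq3.07}).
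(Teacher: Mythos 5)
Your preferred ``cleanest route'' is circular. The stopped quadruple $(Y_{\cdot\wedge\tau},K_{\cdot\wedge\tau},A_{\cdot\wedge\tau},M_{\cdot\wedge\tau})$ does \emph{not} trivially satisfy (LU4): at $t=T$, (LU4) forces the terminal value of the first component to equal $\xi$, i.e. $Y_\tau=\xi$, and for general $t$ one must replace $\int_t^T dK_s=K_T-K_{t\wedge\tau}$ by $\int_t^T dK_{s\wedge\tau}=K_\tau-K_{t\wedge\tau}$ (likewise for $A$ and $M$). Both facts --- $Y_\tau=\xi$ and the constancy of $K,A,M$ after $\tau$ --- are exactly the conclusion of the lemma, so they cannot be assumed when checking that the stopped quadruple solves $\rs(\xi,f+dV,L,U)$. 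Your first Tanaka suggestion does not close either: the flatness conditions do give $\int_\tau^T(Y_{t-}-L_{t-})\,dK_t=0$, but $dK$ may still charge $(\tau,T]$ on $\{Y_{t-}=L_{t-}=L_\tau\}$, and $\int_\tau^T(U_{t-}-L_{t-})\,d(K_t+A_t)$ neither vanishes for free nor controls the mass of $K+A$ without a uniform gap $U-L\ge c>0$, which is not assumed (the lemma does not even assume (B1)). This ``$K$ has no reason to increase'' point is precisely where the work lies: the paper's proof is devoted entirely to it, introducing $\sigma=\inf\{t>\tau:Y_{t\wedge\tau}>Y_t\}\wedge T$, a second stopping time $\zeta$, and the event $B_T$ in order to show that the residual term $\E\int\ind{\{Y_\tau>Y_{s-}\}}\ind{\{Y_{s-}=L_\tau\}}\,dK_s$ vanishes.

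Your middle variant --- restrict to the stochastic interval $[\tau,T]$, where the data become $(\xi,0,0,L_\tau,U_\tau)$ with $L_\tau\le\xi\le U_\tau$, observe that the constant $\xi$ with $K=A=M=0$ is a solution there, and invoke uniqueness --- is the one worth developing, and it is genuinely different from (and shorter than) the paper's direct Tanaka--Meyer argument. But to make it a proof you must (i) actually establish uniqueness of reflected BSDEs on $[\tau,T]$ (the comparison argument behind \cite[Corollary 3.2]{klimsiak2014} localizes to such intervals, but the result is stated on $[0,T]$ and cannot be quoted verbatim), and (ii) note that identifying the first components only yields that $K-A$ and $M$ are constant after $\tau$; separating $K$ from $A$ requires the flatness conditions together with $L<U$ on the relevant set, a gap that is also present, silently, in the last line of the paper's proof. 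As written, the proposal does not constitute a proof.
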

\begin{proof}
By (LU4),
\begin{equation}\label{eq3.001}
Y_{t\wedge\tau}-Y_t = \int_{t\wedge\tau}^tdK_s
-\int_{t\wedge\tau}^tdA_s -\int_{t\wedge\tau}^tdM_s.
\end{equation}
Let $\{\zeta_n\}$ be a fundamental sequence for the local
martingale $M$, and let $\sigma\in\T$. Applying the Tanaka-Meyer
formula we get
\begin{align*}
\left(Y_{(\sigma\wedge\zeta_n)\wedge\tau}
-Y_{\sigma\wedge\zeta_n}\right)^+ &\leq
\int_{(\sigma\wedge\zeta_n)\wedge\tau}^{\sigma\wedge\zeta_n}
\ind{\{Y_{(s\wedge\tau)-}>Y_{s-}\}}\,dK_s\\
&\quad-\int_{(\sigma\wedge\zeta_n)\wedge\tau}^{\sigma\wedge\zeta_n}
\ind{\{Y_{(s\wedge\tau)-}>Y_{s-}\}}\, dA_s \\
&\quad-\int_{(\sigma\wedge\zeta_n)\wedge\tau}^{\sigma\wedge\zeta_n}
\ind{\{Y_{(s\wedge\tau)-}>Y_{s-}\}}\, dM_s\\
&\leq \int_{(\sigma\wedge\zeta_n)\wedge\tau}^{\sigma\wedge\zeta_n}
\ind{\{Y_{\tau}>Y_{s-}\}}\,dK_s -
\int_{(\sigma\wedge\zeta_n)\wedge\tau}^{\sigma\wedge\zeta_n}
\ind{\{Y_{\tau}>Y_{s-}\}}\,dM_s.
\end{align*}
Taking the expectation and then letting $n\to\infty$ yields
\[
\E(Y_{\sigma\wedge\tau}-Y_{\sigma})^+  \leq
E\int_{\sigma\wedge\tau}^{\sigma} \ind{\{Y_{\tau}>Y_{s-}\}}\,dK_s.
\]
On the other hand,  by (LU3) and \eqref{eq3.01},
\begin{align*}
\int_{\sigma\wedge\tau}^{\sigma} \ind{\{Y_{\tau}>Y_{s-}\}}\,dK_s &=\int_{\sigma\wedge\tau}^{\sigma} \ind{\{Y_{\tau}>Y_{s-}\}}
\ind{\{Y_{s-}=L_{s-}\}}\,dK_s \\
&= \int_{\sigma\wedge\tau}^{\sigma}
\ind{\{Y_{\tau}>Y_{s-}\}}\ind{\{Y_{s-}=L_{\tau}\}}\,dK_s.
\end{align*}
Hence
\begin{equation}\label{eq3.10}
E(Y_{\sigma\wedge\tau}-Y_{\sigma})^+ \leq E
\int_{\sigma\wedge\tau}^{\sigma}
\ind{\{Y_{\tau}>Y_{s-}\}}\ind{\{Y_{s-}=L_{\tau}\}}\,dK_s.
\end{equation}
From now on we consider the stopping time $\sigma$ defined by
\[
\sigma=\inf\{t>\tau:Y_{t\wedge\tau}>Y_{t} \}\wedge T.
\]
Observe that
\begin{equation}\label{eq3.12_5}
Y_{t\wedge\tau}\ind{\{t<\sigma\}}\leq Y_t\ind{\{t<\sigma\}},\quad
t\in[0,T].
\end{equation}
Set
\[
B_T = \Big\{\int_{\tau}^{T}
\ind{\{Y_{\tau}>Y_{s-}\}}\ind{\{Y_{s-}=L_{\tau}\}}\,dK_s>0\Big\}.
\]
Since $\ind{\{Y_{\tau}>Y_{s-}\}}\ind{\{Y_{s-}=L_{\tau}\}}
\leq\ind{\{L_\tau<Y_\tau\}}$, we have
\begin{equation}\label{eq3.11}
B_T\subset\{L_{\tau}<Y_{\tau}\}.
\end{equation}
From  (LU3), \eqref{eq3.12_5}, \eqref{eq3.11}  and the fact that
$L_t=L_{t\wedge\tau}$ for $t\in[0,T]$  it follows that
\begin{equation}\label{eq3.12_6}
\ind{B_T}\cdot\int_{\tau\wedge\sigma}^{\sigma} dK_s=0.
\end{equation}
By \eqref{eq3.10} and \eqref{eq3.12_6},
\begin{align*}
E(Y_{\tau}-Y_{\sigma})^+&\leq E\int_{\tau}^{\sigma}
\ind{\{Y_{\tau}>Y_{s-}\}}\ind{\{Y_{s-}=L_{\tau}\}}\,dK_s\nonumber\\
&=E\Big( \ind{B_T} \int_{\tau}^{\sigma}
\ind{\{Y_{\tau}>Y_{s-}\}}\ind{\{Y_{s-}=L_{\tau}\}}\,dK_s
\Big)=0.
\end{align*}
By the above,
\begin{equation}\label{eq3.12_4}
\E((Y_{\tau}-Y_{\sigma})^+\ind{\{\sigma=T\}})=0, \qquad\E(
(Y_{\tau}-Y_{\sigma})^+ \ind{\{\sigma<T\}})=0.
\end{equation}
Suppose that  $\Pr(\sigma=T)=1$. Then by  \eqref{eq3.12_5} and the
first equality in \eqref{eq3.12_4},\break $(Y_{t\wedge\tau}-Y_t)^+=0$
$\Pr$-a.s. for $t\in[0,T]$. We now prove that
\begin{equation}\label{eq3.12_7}
\Pr(\sigma<T)=0.
\end{equation}
By the second equality in \eqref{eq3.12_4},
\begin{equation}\label{eq3.12_8}
\Pr( \{Y_{\tau}\leq Y_{\sigma}\}\cap\{\sigma<T\}) =\Pr(\sigma<T).
\end{equation}
Observe that from the definition of $\sigma$ and the fact that
$L_t=L_{t\wedge\tau}$ for $t\in[0,T]$ it follows that
\begin{equation}\label{eq3.12_10}
\{\sigma<T\}\subset \{L_{\tau}<Y_{\tau}\}.
\end{equation}
Set
\[
\zeta=\inf\left\{t>\sigma:Y_t< \frac{Y_{\tau}+L_{\tau}}{2} \right\}.
\]
By right-continuity of $Y$ and \eqref{eq3.12_10} we have
$Y_{\zeta}\ind{\{\sigma<T\}} \leq
\frac{Y_{\tau}+L_{\tau}}{2}\ind{\{\sigma<T\}}$. By this  and
\eqref{eq3.12_10},
\begin{equation}\label{eq3.12_11}
\Pr\left( \{Y_{\zeta}<Y_{\tau}\}\cap\{\sigma<T\}\right) =\Pr\left(
\sigma<T\right).
\end{equation}
Furthermore, from \eqref{eq3.11},  the definition of $\zeta$ and
(LU3) it follows that
\begin{equation}\label{eq3.12_12}
0\leq\ind{B_T}\cdot\int_{\sigma}^\zeta dK_s
\leq\ind{\{L_\tau<Y_\tau\}}\cdot\int_{\sigma}^\zeta dK_s
=\ind{\{L_\tau<Y_\tau\}}\ind{\{\sigma<\zeta\}}\cdot\int_{\sigma}^\zeta dK_s=0.
\end{equation}
Observe that by the definition of the set $B_T$,
\[
\E\left( \int_{\tau}^{\zeta}
\ind{\{Y_{\tau}>Y_{s-}\}}\ind{\{Y_{s-}=L_{\tau}\}}\,dK_s \right)
=\E\left(\ind{B_T}
\int_{\tau}^{\zeta} \ind{\{Y_{\tau}>Y_{s-}\}}
\ind{\{Y_{s-}=L_{\tau}\}}\,dK_s\right).
\]
By the above equality, \eqref{eq3.12_6} and \eqref{eq3.12_12},
\begin{align*}
\E\left( \int_{\tau}^{\zeta}\ind{\{Y_{\tau}>Y_{s-}\}}\ind{\{Y_{s-}
=L_{\tau}\}}\,dK_s \right) &=\E\left(\ind{B_T} \int_{\tau}^{\sigma}
\ind{\{Y_{\tau}>Y_{s-}\}}
\ind{\{Y_{s-}=L_{\tau}\}}dK_s \right) \\
&\quad+ \E\left(\ind{B_T} \int_{\sigma}^{\zeta}
\ind{\{Y_{\tau}>Y_{s-}\}} \ind{\{Y_{s-}=L_{\tau}\}}\,dK_s \right)\\
&=0.
\end{align*}
This when combined with \eqref{eq3.10} with $\sigma$ replaced by
$\zeta$ gives $\E(Y_{\tau} - Y_{\zeta})^+=0$. Consequently, $E(
Y_{\tau}-Y_{\zeta})^+\ind{\{\sigma<T\}}=0$, which when combined
with \eqref{eq3.12_11} proves \eqref{eq3.12_7}. Thus
$(Y_{t\wedge\tau}-Y_{t})^+=0,$ $t\in[0,T]$, $\Pr$-a.s. Applying the
Tanaka-Meyer formula to the process $(Y_{t\wedge\tau}-Y_{t})^-$
and using similar arguments one can prove that
$(Y_{t\wedge\tau}-Y_{t})^-=0,$ $t\in[0,T]$, $\Pr$-a.s. Hence
\begin{equation}\label{eq3.052}
Y_t=Y_{t\wedge\tau},\quad t\in[0,T].
\end{equation}
From \eqref{eq3.001} and \eqref{eq3.052} we obtain
\[
0=\int_{t\wedge\tau}^tdK_s-\int_{t\wedge\tau}^tdA_s -
\int_{t\wedge\tau}^tdM_s,
\]
which implies (\ref{eq3.07}).
\end{proof}

\begin{theorem}\label{tw3.1}
Assume that {\em (H1) -- (H4), (B1), (B2)} are satisfied. Then
there exists a~unique solution $(Y,K,A,M)$ of $\rs(\xi,f+dV,L,U)$.
Moreover, $Y=\overline{Y}=\underline{Y}$.
\end{theorem}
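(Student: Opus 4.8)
\textbf{Proof strategy for Theorem \ref{tw3.1}.}

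The plan is to exploit the approximating families $\{\overline{Y}^n\}$ and $\{\underline{Y}^m\}$ already constructed before the statement, show that their limits $\overline{Y}$ and $\underline{Y}$ coincide, and then verify that this common process gives rise to a solution. The argument proceeds in three stages: first localize using Lemma \ref{lm3.1}, reducing to a situation where the Mokobodzki-type condition holds and the results of \cite{klimsiak2014} apply directly; second, splice the local solutions together using Lemma \ref{lm3.4} and the stationarity of $\{\tau_n\}$; third, establish uniqueness.

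First I would treat the \emph{localization}. By Lemma \ref{lm3.1} there is $H\in\V$ with $L\leq H\leq U$ and a sequence $\{\tau_n\}\subset\T$ of stationary type with $\E|H|_{\tau_n}<\infty$. On the stochastic interval $[0,\tau_n]$ the truncated barriers $L^n_t=L_{t\wedge\tau_n}$, $U^n_t=U_{t\wedge\tau_n}$ are straddled by the process $H^n_t=H_{t\wedge\tau_n}$, which lies in $\V^1$ and hence is a (trivial) difference of class D supermartingales; thus on $[0,\tau_n]$ the Mokobodzki condition (a) holds. Replacing $\xi$ by $Y^0_{\tau_n}$ (where $Y^0$ solves the unreflected equation), $f$ by $f\ind{[0,\tau_n]}$ and $V$ by $V^{\tau_n}$, all of (H1)--(H4) persist, so by \cite[Theorem 3.1 or the two-barrier result]{klimsiak2014} there is a unique solution $(Y^n,K^n,A^n,M^n)$ of $\rs$ on $[0,T]$ with data stopped at $\tau_n$. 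By Lemma \ref{lm3.4} this solution is itself stopped at $\tau_n$, so it is genuinely a local solution on $[0,\tau_n]$. Moreover, by the comparison/consistency built into the penalization scheme, one identifies $Y^n$ on $[0,\tau_n]$ with the restriction of both $\overline{Y}$ and $\underline{Y}$ there; in particular $\overline{Y}_t=\underline{Y}_t$ for $t\leq\tau_n$.

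Next comes the \emph{pasting}. For $m\geq n$ the uniqueness on $[0,\tau_n]$ forces $(Y^m,K^m,A^m,M^m)$ and $(Y^n,K^n,A^n,M^n)$ to agree on $[0,\tau_n]$ (after re-centering the increasing processes and the martingale at $\tau_n$), so one can define $Y$, $K$, $A$, $M$ on all of $[0,T)$ by setting them equal to the $n$-th local solution on $[\![\tau_{n-1},\tau_n[\![$; since $\liminf\{\tau_n=T\}$ has full probability, this defines the processes a.s.\ on $[0,T]$, with $Y_T=\xi$ by the terminal conditions of the pieces. Verifying (LU1)--(LU4) is then local and routine: (LU2) and (LU3) hold on each $[\![\tau_{n-1},\tau_n[\![$, the equation (LU4) adds up across the pieces, $K,A\in\Vp_0^+$, and $M$ is a local martingale via the localizing sequence obtained by interlacing the fundamental sequences of the $M^n$ with $\{\tau_n\}$. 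Class D of $Y$ follows from the sandwich \eqref{eq3.04}, since $\overline{Y}^0$ and $\underline{Y}^0$ are of class D and $\overline{Y}\leq Y\leq\underline{Y}$ gives $L^+\wedge\!\cdots$; more precisely $|Y|\leq|\overline{Y}^0|\vee|\underline{Y}^0|$. Finally $Y=\overline{Y}=\underline{Y}$ globally because equality holds on each $[0,\tau_n]$ and the $\tau_n$ exhaust $[0,T]$.

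\emph{Uniqueness} of the global solution: given any solution $(Y',K',A',M')$ of $\rs(\xi,f+dV,L,U)$, stop all data at $\tau_n$; by Lemma \ref{lm3.4} the stopped quadruple solves the localized $\rs$ on $[0,\tau_n]$ with terminal value $Y'_{\tau_n}$, and by the two-barrier uniqueness in \cite{klimsiak2014} it must coincide with $(Y^n,K^n,A^n,M^n)$ there; letting $n\to\infty$ along the stationary sequence gives $Y'=Y$, whence $K'=K$, $A'=A$, $M'=M$ as well.

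The main obstacle I anticipate is the \emph{consistency at the splicing times}: one must check carefully that the terminal condition $Y^n_{\tau_n}$ used for the $n$-th piece matches the initial value produced by the $(n+1)$-st piece, i.e.\ that the local solutions are truly nested and not merely individually well-defined, and that the pasted increasing processes remain \emph{predictable} and that the pasted $M$ is a genuine local martingale (rather than merely a local martingale on each stochastic interval, which requires the interlacing argument and a uniform-integrability / class D control near $t=T$ supplied by \eqref{eq3.04}). Handling the behavior at the (a.s.\ eventually attained) limit $T$ — ensuring no mass of $K$ or $A$ is lost and that $Y$ is right-continuous there — is the delicate technical point, precisely the "last step" the introduction warns about.
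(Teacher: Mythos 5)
Your overall architecture (localize via Lemma \ref{lm3.1} to a Mokobodzki situation, solve locally via \cite{klimsiak2014}, stop via Lemma \ref{lm3.4}, paste along the stationary sequence, conclude $Y=\overline{Y}=\underline{Y}$) matches the paper's. But the step you flag at the end as "the main obstacle I anticipate" — consistency of the terminal conditions at the splicing times — is precisely the heart of the paper's proof, and your proposal does not resolve it; it only names it. You take $\xi^{(n)}=Y^0_{\tau_n}$ with $Y^0$ the solution of the unreflected equation. This is not workable: there is no reason for $Y^0_{\tau_n}$ to lie in $[L_{\tau_n},U_{\tau_n}]$, so the localized two-barrier problem need not even admit that terminal value; and even after repairing this, local solutions with these terminal data will in general \emph{not} be nested, because the correct value at $\tau_n$ is the (as yet unconstructed) global solution evaluated at $\tau_n$, not $Y^0_{\tau_n}$. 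The paper breaks this circularity by taking $\xi^{(k)}=\overline{Y}_{\tau_k}$, where $\overline{Y}$ is the increasing limit of the penalized upper-barrier approximations $\overline{Y}^n$ — an object defined globally and independently of the local solutions — and then proving (Step 2) that the local solution satisfies $Y^{(k)}_\tau=\overline{Y}_{\tau\wedge\tau_k}$ for every stopping time $\tau$. That identification is a genuinely nontrivial argument (a Tanaka--Meyer comparison of $Y^{(k),n}$ with $\overline{Y}^n$, the key cancellation \eqref{eq3.8} against $d\overline{A}^n$, Gronwall, Fatou, and the optional cross-section theorem), and it is exactly what makes the pieces nest so that Step 3 can paste them by uniqueness of the semimartingale decomposition. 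Your phrase "by the comparison/consistency built into the penalization scheme, one identifies $Y^n$ on $[0,\tau_n]$ with the restriction of both $\overline{Y}$ and $\underline{Y}$" asserts the conclusion of that step without an argument.

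Two further points. First, your uniqueness argument is circular: to identify a second solution $(Y',K',A',M')$ with the constructed one on $[0,\tau_n]$ via local uniqueness you need $Y'_{\tau_n}=Y_{\tau_n}$, which is part of what you are proving. The paper avoids this entirely: uniqueness for the two-barrier problem holds without any Mokobodzki hypothesis by \cite[Corollary 3.2]{klimsiak2014}, so it is quoted in one line before the existence argument begins. Second, (B2) only assumes $L^+,U^-$ of class D, while Lemma \ref{lm3.1} requires $L,U$ of class D; your proposal applies the lemma directly under (B2). The paper needs an extra reduction (Step 5): replace $L,U$ by $L\vee(\widetilde Y-\e)$ and $U\wedge(\wideutilde Y+\e)$ using the one-barrier solutions of Theorem \ref{tw2.1}, solve for the truncated barriers, and check that the Skorokhod conditions survive. (A smaller omission: the paper also intersects $\tau'_k$ with $\delta_k=\inf\{t:\int_0^t f^-(s,H_s)\,ds>k\}\wedge T$ to secure the integrability needed to invoke \cite[Theorem 3.3]{klimsiak2014} on each piece.)
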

\begin{proof}
By \cite[Corollary 3.2]{klimsiak2014}, there  exists at most one
solution of the equation $\rs(\xi,f+dV,L,U)$, so it suffices to prove the
existence of a solution. To this end, we first assume additionally
that $L,U$ are of class D. Then by Lemma \ref{lm3.1} there exists
$H\in\V$ such that $L_t\leq H_t\leq U_t$, $t\in[0,T]$ $\Pr$-a.s.
and $H_{\cdot\wedge\tau'_k}\in\Vc$ for some sequence $\{\tau'_k\}$
of stationary type. Set
\begin{equation}\label{eq3.06}
\tau_k=\tau'_k\wedge\delta_k
\end{equation}
and $H^{(k)}=H_{\cdot\wedge\tau_k}$, where
\[
\delta_k=\inf\left\{t\geq 0:\int_0^t f^{-}(s,H_s)\,ds>k
\right\}\wedge T.
\]
Observe that  $H^{(k)}\in\Vc$ and by Lemma \ref{lm3.3},
$\{\tau_k\}$  is of stationary type. The rest of the proof we
divide into 5 steps.

{\em Step 1.} We show the existence of a solution of
$\rs(\xi,f+dV,L,U)$ on stochastic intervals $[0,\tau_k]$. Set
\begin{align*}
U^{(k)}=U_{\cdot\wedge\tau_k},\qquad L^{(k)}
=L\ind{[0,\tau_k)}+(L_{\tau_k}\wedge\overline{Y}_{\tau_k})
\ind{[\tau_k,T]},\\
\xi^{(k)}=\overline{Y}_{\tau_k},\qquad f^{(k)}(\cdot,y)
=f(\cdot,y)\ind{[0,\tau_k]}, \qquad V^{(k)}=V_{\cdot\wedge\tau_k},
\end{align*}
where $\overline{Y}$ is defined by (\ref{eq3.03}).  By
\eqref{eq3.04}, $\xi^{(k)}\in L^1(\F_T)$. Also observe that
$L^{(k)}_T\leq \xi^{(k)}\leq U^{(k)}_T$ and $L^{(k)}_t\leq
H^{(k)}_t\leq U^{(k)}_t$, $t\in[0,T]$. Therefore by \cite[Theorem
3.3]{klimsiak2014}  there exists a unique solution
$(Y^{(k)},K^{(k)},A^{(k)},M^{(k)})$ of $\rs(\xi^{(k)},
f^{(k)}+dV^{(k)}, L^{(k)},U^{(k)})$ such that
\begin{equation}\label{eq3.02}
\E K^{(k)}_T<\infty,\qquad\E A^{(k)}_T<\infty.
\end{equation}
In particular, we have
\begin{align}\label{eq3.3}
Y^{(k)}_t&=\xi^{(k)}+\int_t^T f^{(k)}(s,Y^{(k)}_s)\,ds + \int_t^T dV^{(k)}_s\nonumber\\
&\quad + \int_t^TdK^{(k)}_s - \int_t^T dA^{(k)}_s -\int_t^T
dM^{(k)}_s
\end{align}
for $t\in[0,T]$. By Lemma \ref{lm3.4},
\begin{equation}\label{eq3.4}
(Y^{(k)}_t,K^{(k)}_t,A^{(k)}_t,M^{(k)}_t)
=(Y^{(k)}_{t\wedge\tau_k},K^{(k)}_{t\wedge\tau_k},A^{(k)}_{t\wedge\tau_k},
M^{(k)}_{t\wedge\tau_k}),\quad t\in[0,T].
\end{equation}

{\em Step 2.} We are going to show for every $\tau\in\T$,
\begin{equation}
\label{eq3.19} Y^{(k)}_\tau=\overline{Y}_{\tau\wedge\tau_k}.
\end{equation}
By Theorem \ref{tw2.1}, for each  $n\in\N$  there is a unique
solution $(Y^{(k),n},A^{(k),n},M^{(k),n})$ of
$\urs(\xi^{(k)},f^{(k),n}+dV^{(k)}, U^{(k)})$ with
$f^{(k),n}(t,y)=f^{(k)}(t,y)+n(L^{(k)}_t-y)^+$ and the triple
$(Y^{(k),n},A^{(k),n},M^{(k),n})$ satisfies the
equation
\begin{align}\label{eq3.054}
Y^{(k),n}_t&=\xi^{(k)}+\int_t^Tf^{(k)}(s,Y^{(k),n}_s)\,ds
+ \int_t^T dV^{(k)}_s \nonumber\\
&\quad + \int_t^T n(L^{(k)}_s-Y^{(k),n}_s)^+\,ds
-\int_t^TdA^{(k),n}_s - \int_t^T dM^{(k),n}_s,
\end{align}
and by \cite[Theorem 3.3]{klimsiak2014},
\begin{equation}\label{eq3.055}
Y^{(k),n}\nearrow Y^{(k)}.
\end{equation}
Write
$\tilde{Y}^n_t=Y^{(k),n}_{t}-\overline{Y}^n_{t}$,
$\tilde{A}^n_t=A^{(k),n}_{t}-\overline{A}^n_{t}$,
$\tilde{M}^n_t=M^{(k),n}_{t}-\overline{M}^n_{t}$. By
(\ref{eq3.16}), \eqref{eq3.054} and the Tanaka-Meyer formula, for
all $\zeta,\tau\in\T$ we have
\begin{align*}
\tilde{Y}^{n,+}_{\tau\wedge\zeta\wedge\tau_k}&
\leq\tilde{Y}^{n,+}_{\zeta\wedge\tau_k}
+\int_{\tau\wedge\zeta\wedge\tau_k}^{\zeta\wedge\tau_k}
\ind{\{\tilde{Y}^{n}_{s-}>0\}}(f^{(k)}(s,Y^{(k),n}_s)-f^{(k)}
(s,\overline{Y}^n_s))\,ds\nonumber\\
&\quad +\int_{\tau\wedge\zeta\wedge\tau_k}^{\zeta\wedge\tau_k}
 \ind{\{\tilde{Y}^{n}_{s-}>0\}}n\Big((L^{(k)}_s-Y^{(k),n}_s)^+
 -(L_s-\overline{Y}^{n}_s)^+\Big)\,ds\nonumber\\
&\quad -\int_{\tau\wedge\zeta\wedge\tau_k}^{\zeta\wedge\tau_k}
\ind{\{\tilde{Y}^{n}_{s-}>0\}}\,d\tilde{A}^n_s
-\int_{\tau\wedge\zeta\wedge\tau_k}^{\zeta\wedge\tau_k}
\ind{\{\tilde{Y}^{n,+}_{s-}>0\}}\,d\tilde{M}^n_s.
\end{align*}
By this and (H1),
\begin{align}
\label{eq3.6}
\tilde{Y}^{n,+}_{\tau\wedge\zeta\wedge\tau_k}&\leq
\tilde{Y}^{n,+}_{\zeta\wedge\tau_k}
+\mu\int_{\tau\wedge\zeta\wedge\tau_k}^{\zeta\wedge\tau_k}
\tilde{Y}^{n,+}_s\, ds \nonumber\\
&\quad +\int_{\tau\wedge\zeta\wedge\tau_k}^{\zeta\wedge\tau_k}
\ind{\{\tilde{Y}^{n}_{s-}>0\}}n\Big((L^{(k)}_s-Y^{(k),n}_s)^+
-(L_s-\overline{Y}^{n}_s)^+\Big)\,ds\nonumber\\
&\quad +\int_{\tau\wedge\zeta\wedge\tau_k}^{\zeta\wedge\tau_k}
\ind{\{\tilde{Y}^{n}_{s-}>0\}}\, d\overline{A}^n_{s}
-\int_{\tau\wedge\zeta\wedge\tau_k}^{\zeta\wedge\tau_k}
\ind{\{\tilde{Y}^{n}_{s-}>0\}}\,d\tilde{M}^n_s.
\end{align}
Since $y\mapsto (L_s-y)^+$ is  nonincreasing and
$L^{(k)}\ind{[0,\tau_k)}=L\ind{[0,\tau_k)}$, we have
\begin{equation}\label{eq3.7}
\int_{\tau\wedge\zeta\wedge\tau_k}^{\zeta\wedge\tau_k}
\ind{\{\tilde{Y}^{n}_{s-}>0\}}((L^{(k)}_s-Y^{(k),n}_s)^+
-(L_s-\overline{Y}^{n}_s)^+)\,ds\leq 0.
\end{equation}
Since $Y^{(k),n}_{t\wedge\tau_k}\le
U^{(k)}_{t\wedge\tau_k}=U^{(k)}_t$ and
$\overline{Y}^n_{t\wedge\tau_k}\le U_{t\wedge\tau_k}=U^{(k)}_t$,
we have 
\[
\overline{Y}^n_{t\wedge\tau_k}\leq
Y^{(k),n}_{t\wedge\tau_k}\vee\overline{Y}^n_{t\wedge\tau_k}\leq
U^{(k)}_t.
\]
Hence
\begin{align}\label{eq3.8}
\int_{\tau\wedge\zeta\wedge\tau_k}^{\zeta\wedge\tau_k}
\ind{\{\tilde{Y}^{n,+}_{s-}>0\}}\, d\overline{A}^n_{s}& \leq
\int_{\tau\wedge\zeta\wedge\tau_k}^{\zeta\wedge\tau_k}
\ind{\{\tilde{Y}^{n}_{s-}>0\}}\frac{Y^{(k),n}_{s-}
\vee\overline{Y}^n_{s-}-\overline{Y}^n_{s-}}{\tilde{Y}^n_{s-}}\,
d\overline{A}^n_{s}\nonumber\\
&\leq\liminf_{m\to\infty}m
\int_{\tau\wedge\zeta\wedge\tau_k}^{\zeta\wedge\tau_k}
\ind{\{\tilde{Y}^{n}_{s-}>\frac{1}{m}\}}(U_{s-}
-\overline{Y}^n_{s-})\,d\overline{A}^n_{s} =0.
\end{align}
By \eqref{eq3.6}--\eqref{eq3.8},
\[
\tilde{Y}^{n,+}_{\tau\wedge\zeta\wedge\tau_k}
\leq\tilde{Y}^{n,+}_{\zeta\wedge\tau_k}
+\mu\int_{\tau\wedge\zeta\wedge\tau_k}^{\zeta\wedge\tau_k}
\tilde{Y}^{n,+}_s\, ds
-\int_{\tau\wedge\zeta\wedge\tau_k}^{\zeta\wedge\tau_k}
\ind{\{\tilde{Y}^{n}_{s-}>0\}}\,d\tilde{M}^n_s
\]
for any $\tau,\zeta\in\T$. Let $\{\zeta_m\}$ be a fundamental
sequence for the local martingale $\tilde{M}^n$. Replacing $\zeta$
by $\zeta_m$ in the above inequality and then taking the
expectation we obtain
\[
\E\tilde{Y}^{n,+}_{\tau\wedge\zeta_m\wedge\tau_k}
\leq\E\tilde{Y}^{n,+}_{\zeta_m\wedge\tau_k}
+\mu\E\int_{\tau\wedge\zeta_m\wedge\tau_k}^{\zeta_m\wedge\tau_k}
\tilde{Y}^{n,+}_s \,ds.
\]
The processes $Y^{(k)},\overline{Y}^n$ are of class D as solutions
of reflected BSDEs. Consequently, $\tilde{Y}^{n,+}$ is of class D.
Therefore letting $m\to\infty$ in the above inequality we get
\begin{equation}\label{eq3.9}
\E\tilde{Y}^{n,+}_{\tau\wedge\tau_k}\leq\E\tilde{Y}^{n,+}_{\tau_k}
+\mu\E\int_{\tau\wedge\tau_k}^{\tau_k}\tilde{Y}^{n,+}_s\, ds
\end{equation}
for all $\tau\in\T$. Observe that
\begin{align*}
\int_{(\tau\vee t)\wedge\tau_k}^{\tau_k}\tilde{Y}^{n,+}_s\, ds
=\int_{((\tau\wedge\tau_k)\vee
t)\wedge\tau_k}^{\tau_k}\tilde{Y}^{n,+}_s\, ds &=\int_t^{T}
\tilde{Y}^{n,+}_s\ind{[\tau\wedge\tau_k,\tau_k]}(s)\,ds\\
&\leq
\int_t^T \tilde{Y}^{n,+}_{(\tau\vee s)\wedge\tau_k}\,ds.
\end{align*}
From the above inequality and \eqref{eq3.9} with $\tau$ replaced
by $\tau\vee t$ it follows that
\[
\E\tilde{Y}^{n,+}_{(\tau\vee
t)\wedge\tau_k} \leq\E\tilde{Y}^{n,+}_{\tau_k} +\mu\int_t^T
\E\tilde{Y}^{n,+}_{(\tau\vee s)\wedge\tau_k}\,ds,\quad
\tau\in\T,\; t\in[0,T].
\]
Applying Gronwall's inequality to the mapping
$t\mapsto\E\tilde{Y}^{n,+}_{(\tau\vee t)\wedge\tau_k}$ gives
\begin{equation}
\label{eq3.21} \E\tilde{Y}^{n,+}_{(\tau\vee t)\wedge\tau_k} \leq
e^{\mu T}\E\tilde{Y}^{n,+}_{\tau_k} \leq e^{\mu
T}\E|Y^{(k),n}_{\tau_k}-\overline{Y}^n_{\tau_k}|, \quad t\in[0,T].
\end{equation}
By (\ref{eq3.03}),  $\overline{Y}^n_{\tau_k}\nearrow
\overline{Y}_{\tau_k}=\xi^{(k)}$, whereas by (\ref{eq3.055}) and
\eqref{eq3.4}, $Y^{(k),n}_{\tau_k}\nearrow
Y^{(k)}_{\tau_k}=\xi^{(k)}$. Hence, by the monotone convergence
theorem,
\[
\E{|Y^{(k),n}_{\tau_k}-\xi^{(k)}|}\to 0,
\qquad\E{|\overline{Y}^n_{\tau_k}-\xi^{(k)}|}\to 0.
\]
Therefore applying Fatou's lemma and then \eqref{eq3.21} with
$t=T$ we obtain
\begin{align*}
\E\liminf_{n\to\infty}\tilde{Y}^{n,+}_{(\tau\wedge\tau_k)} &\leq
\liminf_{n\to\infty}\E\tilde{Y}^{n,+}_{\tau\wedge\tau_k} \\
&\leq
\liminf_{n\to\infty} e^{\mu T}(\E{|Y^{(k),n}_{\tau_k}-\xi^{(k)}|}
+ \E{|\overline{Y}^n_{\tau_k}-\xi^{(k)}|})=0.
\end{align*}
But $\tilde{Y}^{n}_{\tau\wedge\tau_k}\rightarrow
Y^{(k)}_{\tau\wedge\tau_k}-\overline{Y}_{\tau\wedge\tau_k}
=Y^{(k)}_{\tau}-\overline{Y}_{\tau\wedge\tau_k}$. Hence
$E(Y^{(k)}_{\tau}-\overline{Y}_{\tau\wedge\tau_k})^+=0$. In much
the same way one can show that
$E(Y^{(k)}_{\tau}-\overline{Y}_{\tau\wedge\tau_k})^-=0$, which
completes the proof of (\ref{eq3.19}). By (\ref{eq3.19}) and the
optional cross-section theorem \cite[p. 138-IV, (86)
Theorem]{DellMey1} the processes $Y^{(k)}$ and
$\overline{Y}_{\cdot\wedge\tau_k}$ are indistinguishable. In
particular, $\overline{Y}_{\cdot\wedge\tau_k}$ has c\`adl\`ag
trajectories. By the same method we show that $Y^{(k)}$ and
$\underline{Y}_{\cdot\wedge\tau_k}$ are indistinguishable.

{\it Step 3.} In this step we define a solution on $[0,T]$. By
Step~2, for every $k\in\N$,
\begin{equation}
\label{eq3.18}
Y^{(k)}_{t\wedge\tau_k}=\overline{Y}_{t\wedge\tau_k}
=\overline{Y}_{t\wedge\tau_k\wedge\tau_{k+1}}
=Y^{(k+1)}_{t\wedge\tau_k},\quad t\in[0,T].
\end{equation}
By (\ref{eq3.3}), (\ref{eq3.18}) and  uniqueness of the
semimartingale decomposition,
\[
(Y^{(k+1)}_{t\wedge\tau_k}, K^{(k+1)}_{t\wedge\tau_k},
A^{(k+1)}_{t\wedge\tau_k},
M^{(k+1)}_{t\wedge\tau_k})=(Y^{(k)}_{t\wedge\tau_k},
K^{(k)}_{t\wedge\tau_k}, A^{(k)}_{t\wedge\tau_k},
M^{(k)}_{t\wedge\tau_k}),\quad t\in[0,T].
\]
Therefore we may define processes $Y,K,A,M$ on $[0,T]$ by
\begin{equation}
\label{eq3.23} Y_t=Y^{(k)}_t,\quad K_t=K^{(k)}_t,\quad
A_t=A^{(k)}_t, \quad M_t=M^{(k)}_t,\quad t\in[0,\tau_k].
\end{equation}
By Step 2, $Y_{\tau\wedge\tau_k}=\underline{Y}_{\tau\wedge\tau_k}
=\overline{Y}_{\tau\wedge\tau_k}$ for all $\tau\in\T$ and
$k\in\N$, so letting $k\to\infty$ gives
$Y_{\tau}=\overline{Y}_{\tau}$ for
$\tau\in\T$. Hence, by the cross-section theorem,
\[
Y=\overline{Y}.
\]
The quadruple $(Y,K,A,M)$ is a solution of $\rs(\xi,f+dV,L,U)$.
Indeed, from (\ref{eq3.3}), (\ref{eq3.23}) and stationarity of
$\{\tau_k\}$ it follows that $(Y,K,A,M)$ satisfies (LU1) and
(LU4). Moreover, from the fact that
$(Y^{(k)},K^{(k)},A^{(k)},M^{(k)})$ is a solution of
$\rs(\xi^{(k)}, f^{(k)}+dV^{(k)}, L^{(k)},U^{(k)})$ and
(\ref{eq3.23}) it follows that  $L_{t\wedge\tau_k}\leq
Y_{t\wedge\tau_k}\leq U_{t\wedge\tau_k}$, $t\in[0,T]$,
$\Pr$-a.s. and
\[
\int_0^{\tau_k}(Y_{t-}-L_{t-})\,dK_t =\int_0^{\tau_k}
(U_{t-}-Y_{t-})\,dA_t=0
\]
for $k\in\N$. Since  $\{\tau_k\}$ is of stationary type, this
implies (LU2) and~(LU3).

{\it Step 4.} Repeating the arguments from step 3 and step 4 for $\xi^{(k)}=\underline{Y}_{\tau_k}$ we prove that $\underline{Y}=Y$, where $(Y,K,A,M)$ is a solution of $\rs(\xi,f+dV,L,U)$. Therefore, by the uniqueness of solution, $Y=\overline{Y}=\underline{Y}.$

{\em Step 5.} We now show how to dispense with the assumption that
$L,U$ are of class D.

Let $\wideutilde{Y}, \widetilde{Y}$ be processes appearing in
Theorem \ref{tw2.1}. 
By \cite[Proposition 2.1]{kliroz},
\[
\widetilde{Y}_t\leq\wideutilde{Y}_t,\quad t\in[0,T],\quad
P\mbox{-a.s.}
\]
Let $\varepsilon>0$, and let $L^\e_t=L_t\vee(\widetilde{Y}_t-\e)$,
$U^\e_t=U_t\wedge(\wideutilde{Y}_t+\e)$. If $L,U$ satisfy (B1),
(B2), then also $L^\e,U^\e$ satisfy (B1) and are processes of
class (D). By steps 1-3 there exists a unique solution $(Y,K,A,M)$
of $\rs(\xi,f+dV,L^\e,U^\e)$. As in the proof of Theorem \ref{tw2.1}, one can check that $(Y,K,A,M)$ is also solution of
$\rs(\xi,f+dV,L,U)$.
\end{proof}

\begin{corollary}\label{wn3.6}
Assume that $L,U$ satisfy \mbox{\rm(B1), (B2)}. Then there exists\break
a~semimartingale $Y$ of class \mbox{\rm D} such that $L_t\leq
Y_t\leq U_t$, $t\in[0,T]$ $\Pr$-a.s.
\end{corollary}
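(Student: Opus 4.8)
The plan is to apply Theorem \ref{tw3.1} to the trivial generator. First I would fix $f\equiv 0$, $V\equiv 0$ and choose as terminal condition $\xi=(0\vee L_T)\wedge U_T$. Since $L_T\le U_T$ one sees at once that $L_T\le\xi\le U_T$, and since $\xi\le 0\vee L_T=L_T^+$ while $\xi\ge 0\wedge U_T=-U_T^-$, we get $|\xi|\le L_T^++U_T^-$; by (B2) the processes $L^+,U^-$ are of class D, so $L_T^+,U_T^-\in L^1$ and hence $\xi\in L^1(\F_T)$. With $f\equiv 0$ and $V\equiv 0$ the conditions (H1)--(H4) are trivially satisfied (take $\mu=0$), while (B1) and (B2) hold by assumption.

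Next I would invoke Theorem \ref{tw3.1} to obtain a solution $(Y,K,A,M)$ of $\rs(\xi,0,L,U)$. By (LU1) the process $Y$ is of class D, $K,A\in\Vp_0^+$ and $M\in\M_{loc}$ with $M_0=0$; by (LU2) we have $L_t\le Y_t\le U_t$ for $t\in[0,T]$, $\Pr$-a.s. It remains only to recognize $Y$ as a semimartingale. Taking $t=0$ in (LU4) gives $Y_0=\xi+K_T-A_T-M_T$; substituting this back into (LU4) we obtain $Y_t=Y_0-K_t+A_t+M_t$ for $t\in[0,T]$. Thus $Y$ is the sum of a constant, the finite variation process $A-K$ and the local martingale $M$, hence a semimartingale, and it lies between $L$ and $U$.

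I do not expect any genuine obstacle: the corollary is simply Theorem \ref{tw3.1} read with $f\equiv0$. The only point deserving a moment's attention is the construction of an integrable $\xi$ squeezed between $L_T$ and $U_T$; this is precisely where hypothesis (B2) enters, since without some integrability of the barriers at time $T$ the $L^1$ two-barrier problem could not even be set up.
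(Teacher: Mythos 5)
Your proposal is correct and coincides with the paper's own proof: the paper also takes $f\equiv 0$, $V\equiv 0$ and $\xi=L_T^+\wedge U_T$ (which equals your $(0\vee L_T)\wedge U_T$) and applies Theorem \ref{tw3.1}. You merely spell out the integrability of $\xi$ and the semimartingale decomposition, which the paper leaves implicit.
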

\begin{proof}
It is enough to consider $\xi=L_T^+\wedge U_T$, $f\equiv 0$,
$V\equiv 0$, and apply Theorem \ref{tw3.1}.
\end{proof}

\begin{remark}\label{rem3.6}
Let $\{\tau_n\}$ be a sequence defined by \eqref{eq3.06}. If there
exists  $k_0\in\N$ such that
\begin{equation}\label{eq3.27}
\Pr(\tau_{k_0}=T)=1,
\end{equation}
then by from Step 3 of the proof of Theorem \ref{tw3.1} it follows that  
\[
(Y,K,A,M)=(Y^{(k_0)}, K^{(k_0)}, A^{(k_0)}, M^{(k_0)})
\] 
is a solution of $\rs(\xi,f+dV, L,U)$. Furthermore, by \eqref{eq3.02},
$\E K_T<\infty$ and $\E A_T<\infty$, and by \cite[Lemma
2.3]{kliroz}, $f(\cdot,Y)\in L^1(\F)$. Also note that a sufficient
condition for (\ref{eq3.27}) to hold is the following: there is
$H\in\V^1$ such that $L_t\le H_t\le U_t$, $t\in [0,T]$, and
$t\rightarrow f(t,H_t)$ is bounded from below.
\end{remark}

The following example shows that in general $\E{K_T}$  and
$\E{A_T}$ need not  be finite even if $f\equiv 0$ and $V\equiv0$.

\begin{example}
Let  $\mathbb{F}$ be a Brownian filtration, and let $L,U$ be
defined as in Example \ref{ex3.3}. Set $\xi=(L_T+U_T)/2$ and
$f\equiv0$, $V\equiv0$. By Theorem \ref{tw3.1}, there exists a
unique solution $(Y,K,A,M)$ of $\rs(\xi,0,L,U)$. In particular,
\[
Y_t=\xi+\int_t^T dK_s - \int_t^T dA_s - \int_t^T dM_s,\quad
t\in[0,T].
\]
Let $\tau_n=1-\frac{1}{n}$. Since the filtration is Brownian,
$\Delta M_{\tau_n}=0$ $\Pr$-a.s. for every $n\in\N$. Hence
\[
\Delta Y_{\tau_n}= \Delta A_{\tau_n}-\Delta K_{\tau_n},\quad
n\in\N.
\]
In fact,  by (LU2), (LU3) and the definitions of $L$ and $U$,
$\Delta Y_{\tau_m}=\Delta A_{\tau_m}$ if $m$ is even and $\Delta
Y_{\tau_m}=-\Delta K_{\tau_m}$ if $m$ is odd. From this and the
fact that $L\le Y\le U$ it follows that
\[
\Pr(\{\Delta A_{\tau_m}\geq 1\} \cap B_n) =Cn^{-2},\quad 2\leq
m\leq n+1
\]
when $m$ is even, and
\[
\Pr(\{\Delta K_{\tau_m}\geq 1\} \cap B_n) = Cn^{-2},\quad 2\leq
m\leq n+1
\]
when $m$ is odd. Hence
\[
\E{K_T}=\E{|K|_T}=\sum_{n=1}^\infty \E{|K|_T\ind{B_n}}\geq
\sum_{n=2}^\infty
\frac{n-1}{2}\Pr(B_n)=C\sum_{n=2}^\infty\frac{n-1}{2n^2}=\infty
\]
and
\[
\E{A_T}=\E{|A|_T}=\sum_{n=1}^\infty \E{|A|_T\ind{B_n}}\geq
\sum_{n=2}^\infty
\frac{n-1}{2}\Pr(B_n)=C\sum_{n=2}^\infty\frac{n-1}{2n^2}=\infty.
\]
\end{example}

\section{Dynkin games}
\label{sec4}

In this section we consider a certain stochastic game of stopping
called Dynkin game. For an interpretation of notions which we
define below (payoff function, lower and upper value of the game)
we defer the reader to \cite{CvitKar1996}.

Let $L,U$ be \cadlag processes of class D such that $L_t\leq U_t$,
$t\in[0,T]$, $\Pr$-a.s., and let $f,\xi,V$ be as in Section
\ref{sec3}. Also assume that conditions (H1)--(H4) are satisfied.
Consider a stopping game with payoff function
\begin{align}\label{eq.payoff}
R_t(\sigma,\tau)&=\int_t^{\sigma\wedge\tau}f(s,Y_s)\,ds +
\int_t^{\sigma\wedge\tau} dV_s \nonumber\\
&\quad+ \xi\ind{\{\sigma\wedge\tau=T\}}
+L_\tau\ind{\{\tau<T,\tau\leq\sigma\}}
+U_\sigma\ind{\{\sigma<\tau\}},\quad \sigma,\tau\in\T_t,
\end{align}
where $(Y,K,A,M)$ is a solution of $\rs(\xi,f+dV, L,U)$ such that
$K,A\in\V_0^1.$ By Remark \ref{rem3.6} such solution exists if (B1),
(B2) and \eqref{eq3.27} are satisfied.

The lower value $\underline{V}$ and the upper value $\overline{V}$
of the stochastic game corresponding to $R$ are defined by
\[
\underline{V}_t=\esssup_{\tau\in\T_t}\essinf_{\sigma\in\T_t}\E(R_t(\sigma,\tau)|
\F_t),\qquad
\overline{V}_t=\essinf_{\sigma\in\T_t}\esssup_{\tau\in\T_t}\E(R_t(\sigma,\tau)|
\F_t).
\]
We say that the game has a value if
$\underline{V}_t=\overline{V}_t$, $t\in[0,T]$, $\Pr$-a.s.

\begin{lemma}\label{lm4.1}
Let $\{\tau_n\}$ be a sequence of stopping times such that
$\tau_n\nearrow \tau$ $\Pr$-a.s., and
\begin{equation}\label{eq4.1}
\Pr(\liminf_{n\to\infty}\{\tau_n=\tau\})=1.
\end{equation}
Then for every $\sigma\in\T_t$, $ \E(R_t(\sigma,\tau_n)|\F_t)\to
\E(R_t(\sigma,\tau)|\F_t)$ $\Pr$-a.s. as $n\rightarrow\infty$.
\end{lemma}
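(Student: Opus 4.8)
Fix $t\in[0,T]$ and $\sigma\in\T_t$, and set $A_n:=\{\tau_n<\tau\}$. Since $\{\tau_n\}$ is nondecreasing with $\tau_n\le\tau$, the sets $A_n$ are nonincreasing, and $\bigcap_n A_n=\big(\liminf_n\{\tau_n=\tau\}\big)^c$ is $\Pr$-null by hypothesis; thus $\ind{A_n}\to 0$ $\Pr$-a.s. Reading off \eqref{eq.payoff}, for a fixed $\sigma$ the variables $R_t(\sigma,\tau_n)$ and $R_t(\sigma,\tau)$ agree on $A_n^c$ (note in particular that on $A_n$ one has $\tau_n<\tau\le T$). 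Hence $R_t(\sigma,\tau_n)\to R_t(\sigma,\tau)$ $\Pr$-a.s., and
\[
\E(R_t(\sigma,\tau_n)|\F_t)-\E(R_t(\sigma,\tau)|\F_t)=\E\big((R_t(\sigma,\tau_n)-R_t(\sigma,\tau))\ind{A_n}\,\big|\,\F_t\big),
\]
so it suffices to show the right-hand side tends to $0$ $\Pr$-a.s.

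Next I would record a uniform integrability bound. Because $(Y,K,A,M)$ is a solution with $K,A\in\V_0^1$, Remark \ref{rem3.6} (via \cite[Lemma 2.3]{kliroz}) gives $f(\cdot,Y)\in L^1(\F)$, which together with (H4) makes $W:=\int_0^T|f(s,Y_s)|\,ds+|V|_T+|\xi|$ an element of $L^1$. From \eqref{eq.payoff}, for every $n$,
\[
|R_t(\sigma,\tau_n)|\le W+|U_\sigma|+|L_{\tau_n}|,
\]
where $|U_\sigma|\in L^1$ since $U$ is of class D and $\{|L_{\tau_n}|\}_n$ is uniformly integrable since $L$ is of class D. This bounds the integrand above by $\big(W+|U_\sigma|+|R_t(\sigma,\tau)|\big)\ind{A_n}+|L_{\tau_n}|\ind{A_n}$.

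Finally I would pass to the limit in the two pieces. The term with the fixed $L^1$ majorant $W+|U_\sigma|+|R_t(\sigma,\tau)|$, together with $\ind{A_n}\downarrow 0$, is dealt with by the conditional dominated convergence theorem, giving $\E\big((W+|U_\sigma|+|R_t(\sigma,\tau)|)\ind{A_n}\,\big|\,\F_t\big)\to0$ $\Pr$-a.s. For the barrier term, observe $A_n=\{\tau_n<\tau\}\in\F_{\tau_n}$ (a standard fact for two stopping times $\tau_n\le\tau$), so $\hat\tau_n:=\tau_n\ind{A_n}+T\ind{A_n^c}\in\T$ and $|L_{\hat\tau_n}|=|L_{\tau_n}|\ind{A_n}+|L_T|\ind{A_n^c}\ge|L_{\tau_n}|\ind{A_n}$; hence $\{|L_{\tau_n}|\ind{A_n}\}_n$ is uniformly integrable and $|L_{\tau_n}|\ind{A_n}\to0$ in $L^1$. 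The main obstacle is precisely the last step: this $L^1$ convergence only yields $\E\big(|L_{\tau_n}|\ind{A_n}\,\big|\,\F_t\big)\to0$ in $L^1$, whereas the statement demands $\Pr$-a.s.\ convergence, and a bare uniform-integrability argument is not enough (an ``eventually constant'' sequence need not have $\Pr$-a.s.\ convergent conditional expectations). I expect to close this by using the monotone structure of the $A_n$ more carefully: writing, for $\lambda>0$,
\[
\E\big(|L_{\tau_n}|\ind{A_n}\,\big|\,\F_t\big)\le\lambda\,\E(\ind{A_n}|\F_t)+\E\big(|L_{\hat\tau_n}|\ind{\{|L_{\hat\tau_n}|>\lambda\}}\,\big|\,\F_t\big),
\]
where $\E(\ind{A_n}|\F_t)\downarrow0$ $\Pr$-a.s.\ by conditional monotone convergence, and controlling the second summand uniformly in $n$ by the class-D modulus of uniform integrability of $L$ before letting $n\to\infty$ and then $\lambda\to\infty$. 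This bookkeeping (and the verification $A_n\in\F_{\tau_n}$) is the only genuinely technical point; everything else is the conditional dominated convergence theorem applied to sequences that are eventually constant.
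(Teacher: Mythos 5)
Your overall strategy is the same as the paper's: show that $R_t(\sigma,\tau_n)\to R_t(\sigma,\tau)$ $\Pr$-a.s.\ (indeed the two payoffs coincide off $A_n=\{\tau_n<\tau\}$ and $\ind{A_n}\downarrow0$), and that $\{R_t(\sigma,\tau_n)\}_n$ is uniformly integrable via the bound $|R_t(\sigma,\tau_n)|\le W+|U_\sigma|+|L_{\tau_n}|$ with $W\in L^1$ and $L,U$ of class D. The paper stops at these two facts and cites \cite[Theorem 1.3]{LipcerSziriajewEN}; you instead try to push the convergence through the conditional expectation by hand, and you correctly isolate the delicate point: uniform integrability plus a.s.\ convergence of the integrands gives $L^1$ (hence in-probability) convergence of the conditional expectations, but not a.s.\ convergence in general. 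Everything up to the reduction to $\E(|L_{\tau_n}|\ind{A_n}|\F_t)\to0$ is correct, including $A_n\in\F_{\tau_n}$ and the conditional dominated convergence for the part carrying the fixed integrable majorant.

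The proposed closing step, however, does not deliver the a.s.\ conclusion. In the inequality
\[
\E\big(|L_{\tau_n}|\ind{A_n}\,\big|\,\F_t\big)\le\lambda\,\E(\ind{A_n}|\F_t)
+\E\big(|L_{\hat\tau_n}|\ind{\{|L_{\hat\tau_n}|>\lambda\}}\,\big|\,\F_t\big)=:\lambda a_n+b^\lambda_n,
\]
the class-D modulus controls only $\sup_n\E b^\lambda_n\le\e(\lambda)$, i.e.\ the second summand in expectation, not $\Pr$-a.s.\ uniformly in $n$. Since $a_n\to0$ a.s., you get $\limsup_n\E(|L_{\tau_n}|\ind{A_n}|\F_t)\le\limsup_n b^\lambda_n$ a.s., but a sequence of nonnegative random variables whose expectations are all below $\e(\lambda)$ can have a.s.\ limsup equal to $1$ or $+\infty$ (a typewriter sequence); reverse Fatou would require an integrable dominating variable, and $\sup_n|L_{\tau_n}|$ need not be integrable for a class-D process. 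So letting $n\to\infty$ and then $\lambda\to\infty$ yields $\E(|L_{\tau_n}|\ind{A_n}|\F_t)\to0$ in $L^1$, hence in probability, but not a.s. To be fair, this is essentially what the paper's citation delivers as well: the cited result is the Vitali-type statement that uniform integrability plus a.s.\ convergence gives $\E|R_t(\sigma,\tau_n)-R_t(\sigma,\tau)|\to0$, and in-probability convergence of the conditional expectations is all that is actually used in the proof of Theorem \ref{tw4.2} (where one may pass to an a.s.\ convergent subsequence). So you have located, rather than created, the soft spot; but as a proof of the lemma as stated, with a.s.\ convergence, your argument is incomplete at exactly the step you flag as technical.
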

\begin{proof}
By \eqref{eq.payoff} and \eqref{eq4.1}, $R_t(\sigma,\tau_n)\to
R_t(\sigma,\tau)$ $\Pr$-a.s. Since  $V,L,U$ are of class D and
$\E|\xi|+\E\int_0^T|f(t,Y_t)|dt<\infty$, we conclude  from
(\ref{eq.payoff}) that the family $\{R_t(\sigma,\tau_n)\}_{n\in\N}$ is a
uniformly integrable family of random variables. Therefore the
desired convergence follows from \cite[Theorem 1.3]{LipcerSziriajewEN}.
\end{proof}

\begin{theorem}
\label{tw4.2} Let the assumptions of Theorem \ref{tw3.1} hold, and
additionally \mbox{\rm(\ref{eq3.27})} is satisfied. Then the
stochastic game corresponding to the payoff function
\eqref{eq.payoff} has the value equal to the first component of
the solution of $\rs(\xi,f+dV,L,U)$, i.e.
\begin{equation}
\label{eq4.2} Y_t=\underline{V}_t=\overline{V}_t,\quad
t\in[0,T],\quad P\mbox{-}a.s.
\end{equation}
\end{theorem}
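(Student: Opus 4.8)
The plan is to verify the standard Dynkin-game verification argument: show that for the solution $(Y,K,A,M)$ of $\rs(\xi,f+dV,L,U)$ with $K,A\in\V_0^1$, the process $Y$ simultaneously dominates the $\inf\sup$ and is dominated by the $\sup\inf$, whence $Y_t=\underline V_t=\overline V_t$. First I would fix $t\in[0,T]$ and, for arbitrary $\sigma,\tau\in\T_t$, rewrite $Y_t$ using (LU4): telescoping the equation from $t$ to $\sigma\wedge\tau$ gives
\[
Y_t=\int_t^{\sigma\wedge\tau}f(s,Y_s)\,ds+\int_t^{\sigma\wedge\tau}dV_s+\int_t^{\sigma\wedge\tau}d(K_s-A_s)-\int_t^{\sigma\wedge\tau}dM_s+Y_{\sigma\wedge\tau}.
\]
Since $K,A\in\V_0^1$ and $M$ is then a genuine (uniformly integrable) martingale because $Y$ is of class D, taking $\E(\cdot|\F_t)$ kills the martingale term, so
\[
Y_t=\E\Big(\int_t^{\sigma\wedge\tau}f(s,Y_s)\,ds+\int_t^{\sigma\wedge\tau}dV_s+Y_{\sigma\wedge\tau}+\int_t^{\sigma\wedge\tau}d(K_s-A_s)\,\Big|\,\F_t\Big).
\]
Comparing with \eqref{eq.payoff}, on $\{\sigma\wedge\tau=T\}$ we have $Y_{\sigma\wedge\tau}=\xi$; on $\{\tau<T,\tau\le\sigma\}$ we have $Y_\tau\ge L_\tau$; and on $\{\sigma<\tau\}$ we have $Y_\sigma\le U_\sigma$. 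So $Y_{\sigma\wedge\tau}-[\,\xi\ind{\{\sigma\wedge\tau=T\}}+L_\tau\ind{\{\tau<T,\tau\le\sigma\}}+U_\sigma\ind{\{\sigma<\tau\}}\,]$ is $\ge0$ when $\tau\le\sigma$ and $\le0$ when $\sigma<\tau$, which is exactly what one needs to sandwich the payoff.

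Next I would choose the optimal stopping times. Following the usual construction, for the player maximizing over $\tau$ set $\tau^*_t=\inf\{s\ge t: Y_s\le L_s\}\wedge T$ (so that $Y=L$ on the set where $K$ increases, by (LU3)), and for the player minimizing over $\sigma$ set $\sigma^*_t=\inf\{s\ge t: Y_s\ge U_s\}\wedge T$. On $[t,\tau^*_t)$ the flat condition $\int(Y_{s-}-L_{s-})\,dK_s=0$ forces $K$ to be constant, and similarly $A$ is constant on $[t,\sigma^*_t)$; hence on $[t,\tau^*_t\wedge\sigma^*_t]$ the term $\int_t^{\cdot}d(K_s-A_s)$ essentially vanishes up to a boundary contribution of the right sign. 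Concretely: with $\tau=\tau^*_t$ fixed and $\sigma\in\T_t$ arbitrary, $K$ is constant on $[t,\sigma\wedge\tau^*_t)$ so $\int_t^{\sigma\wedge\tau^*_t}dK_s\ge0$ is in fact $0$ up to the endpoint, while $\int_t^{\sigma\wedge\tau^*_t}dA_s\ge0$; combined with the barrier inequalities above this yields $Y_t\le\E(R_t(\sigma,\tau^*_t)|\F_t)$ for every $\sigma$, hence $Y_t\le\essinf_\sigma\E(R_t(\sigma,\tau^*_t)|\F_t)\le\underline V_t$. Symmetrically, fixing $\sigma=\sigma^*_t$ gives $Y_t\ge\esssup_\tau\E(R_t(\sigma^*_t,\tau)|\F_t)\ge\overline V_t$. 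Since always $\underline V_t\le\overline V_t$, we get $Y_t=\underline V_t=\overline V_t$, i.e. \eqref{eq4.2}.

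The technical subtlety, and the place where Lemma \ref{lm4.1} enters, is the handling of the jump at the optimal stopping time and the case $\tau^*_t<T$: the flat condition is stated with left limits, $\int_0^T(Y_{t-}-L_{t-})\,dK_t=0$, so one must be careful that $K$ does not charge $\{\tau^*_t\}$ in the wrong direction, and the clean identity $Y_{\tau^*_t}=L_{\tau^*_t}$ may only hold after a limiting argument. The standard fix is to approximate $\tau^*_t$ from below by $\tau_n=\inf\{s\ge t:Y_s\le L_s+1/n\}\wedge T$, which satisfy $\tau_n\nearrow\tau^*_t$ and the stationarity-type condition \eqref{eq4.1}; on $[t,\tau_n)$ the process $Y$ stays strictly above $L$, so $K$ is genuinely constant there, giving $Y_t\le\E(R_t(\sigma,\tau_n)|\F_t)$ for every $\sigma$, and then Lemma \ref{lm4.1} passes to the limit $\E(R_t(\sigma,\tau_n)|\F_t)\to\E(R_t(\sigma,\tau^*_t)|\F_t)$. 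An analogous approximation from below handles $\sigma^*_t$. The main obstacle is thus not conceptual but bookkeeping: making the boundary terms of $\int d(K-A)$ rigorously have the right sign at the (possibly predictable) stopping times involved, and checking that $M$ is a true martingale — which follows since $Y,V,K,A$ are all of class D (or integrable), so the martingale part of \eqref{eq.payoff}'s right-hand side is of class D and $M_0=0$, hence uniformly integrable.
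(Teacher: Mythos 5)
Your core computation is the paper's: conditioning (LU4) between $t$ and $\sigma\wedge\tau$, using the flat--off conditions to kill $\int dK$ (resp.\ $\int dA$) before $Y$ reaches the lower (resp.\ upper) barrier, and using the barrier inequalities to compare $Y_{\sigma\wedge\tau}$ with the terminal payoff. The paper does exactly this with the $\varepsilon$-shifted hitting times $\tau_t^\e=\inf\{s>t:Y_s\le L_s+\e\}\wedge T$, $\sigma_t^\e=\inf\{s>t:Y_s\ge U_s-\e\}\wedge T$, obtaining $\overline V_t-\e\le Y_t\le\underline V_t+\e$ and concluding via the $\e$-optimality criterion of \cite[Lemma 5.3]{Lepeltier2007}. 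Your remark that the $\e$-shift is forced by the left-limit form of (LU3) is exactly right: with the exact hitting time $\tau^*_t$, $K$ may charge points $s<\tau^*_t$ where $Y_{s-}=L_{s-}$ even though $Y>L$ on $[t,\tau^*_t)$, and the resulting $+\int dK$ term has the wrong sign.

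The genuine gap is your final limiting step. You invoke Lemma \ref{lm4.1} to pass from $\E(R_t(\sigma,\tau_n)|\F_t)$ to $\E(R_t(\sigma,\tau^*_t)|\F_t)$ with $\tau_n=\inf\{s\ge t:Y_s\le L_s+1/n\}\wedge T$, but that lemma requires the stationarity condition \eqref{eq4.1}, i.e.\ $\tau_n=\tau^*_t$ eventually a.s., which these times do not satisfy in general (if $Y-L$ decreases continuously to $0$ at $\tau^*_t$, then $\tau_n<\tau^*_t$ for every $n$). Worse, even pointwise convergence $R_t(\sigma,\tau_n)\to R_t(\sigma,\tau^*_t)$ fails for c\`adl\`ag $L$: along $\tau_n\nearrow\tau^*_t$ strictly one gets $L_{\tau_n}\to L_{\tau^*_t-}\neq L_{\tau^*_t}$ in general, and the indicators $\ind{\{\tau_n<T\}}$ can flip in the limit. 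This is precisely why exact saddle points need not exist here and why the paper never passes to $\tau^*_t$. Fortunately the step is superfluous: from $Y_t\le\E(R_t(\sigma,\tau_n)|\F_t)+1/n$ for all $\sigma$ you get directly $Y_t\le\essinf_\sigma\E(R_t(\sigma,\tau_n)|\F_t)+1/n\le\underline V_t+1/n$, and letting $n\to\infty$ (of reals, not of stopping times) gives $Y_t\le\underline V_t$; symmetrically $Y_t\ge\overline V_t$, and $\underline V_t\le\overline V_t$ closes the sandwich. With that repair your argument coincides with the paper's, the only remaining (harmless) difference being that you treat $M$ as a genuine uniformly integrable martingale (valid here since $Y$ is of class D, $f(\cdot,Y)\in L^1(\F)$ by Remark \ref{rem3.6}, and $V,K,A\in\V^1$), whereas the paper localizes with a fundamental sequence $\zeta_n$ --- for which \eqref{eq4.1} does hold --- and only there uses Lemma \ref{lm4.1}.
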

\begin{proof}
By \cite[Lemma 5.3]{Lepeltier2007}, to show that the game has a
value it suffices to prove that  for any $\e>0$ and $t\in[0,T]$
there exist $\sigma_t^\e,\tau_t^\e\in\T_t$ such that for all
$\sigma,\tau\in\T_t$,
\begin{equation}
\label{eq4.3} -\e+\E(R_t(\sigma_t^\e,\tau)| \F_t) \leq
\E(R_t(\sigma,\tau_t^\e)| \F_t)+\e.
\end{equation}
To show (\ref{eq4.3}), we set  $\sigma_t^\e=\inf\{s>t: Y_s\geq
U_s-\e\}\wedge T$. Observe that $Y_{s-}<U_{s-}$ for
$t<s\le\sigma_t^\e$, and hence, by (LU3),
\begin{equation}\label{eq4.4}
A_s\ind{(t,\sigma^\e_t]}(s)=A_{\sigma_t^\e}\ind{(t,\sigma^\e_t]}(s),\quad s\in[0,T].
\end{equation}
Clearly, for any $\tau\in\T_t$,
\[
\{\sigma_t^\e=T\}\subset \{\tau\leq \sigma_t^\e\},\qquad \{\tau >
\sigma_t^\e\} \subset \{\sigma_t^\e < T\}.
\]
By this and  \eqref{eq4.4} it follows that on the set $\{\tau\leq \sigma_t^\e\}$ we have
\begin{align*}
R_t(\sigma_t^\e,\tau)&=\int_t^{\tau}f(s,Y_s)\,ds
+ \int_t^{\tau} dV_s + \xi\ind{\{\tau=T\}}+L_\tau\ind{\{\tau<T\}}\\
&\leq \int_t^{\tau}f(s,Y_s)\,ds + \int_t^{\tau} dV_s
+\xi\ind{\{\tau=T\}}+Y_\tau\ind{\{\tau<T\}} + \int_t^{\tau}dK_s
-\int_t^{\tau}dA_s\\
&\leq Y_t +\int_t^\tau dM_s,
\end{align*}
whereas on $\{\tau>\sigma_t^\e\}$ we have
\begin{align*}
R_t(\sigma_t^\e,\tau)&=\int_t^{\sigma_t^\e}f(s,Y_s)\,ds
+ \int_t^{\sigma_t^\e} dV_s +U_{\sigma_t^\e}\ind{\{\sigma_t^\e<\tau\}}\\
&\leq \int_t^{\sigma_t^\e}f(s,Y_s)\,ds
+ \int_t^{\sigma_t^\e} dV_s + Y_{\sigma_t^\e}
+ \int_t^{\sigma_t^\e}dK_s - \int_t^{\sigma_t^\e}dA_s +\e\\
&= Y_t + \int_t^{\sigma_t^\e} dM_s +\e.
\end{align*}
Hence
\[
R_t(\sigma_t^\e,\tau)=R_t(\sigma_t^\e,\tau)\ind{\{\tau\leq
\sigma_t^\e\}} + R_t(\sigma_t^\e,\tau)\ind{\{\tau > \sigma_t^\e\}}
\leq Y_t + \int_t^{\sigma_t^\e\wedge\tau} dM_s +\e
\]
$\Pr$-a.s. Let $\{\zeta_n\}$ be a fundamental sequence for the
local martingale $M$, and let $\tau_n= \tau\wedge\zeta_n$. Then
$\{\tau_n\}$ satisfies the assumptions of Lemma \ref{lm4.1} and
\[
E(R_t(\sigma_t^\e,\tau\wedge\zeta_n)|\F_t)\leq E\left(Y_t +
\int_t^{\sigma_t^\e\wedge\tau\wedge\zeta_n} dM_s
+\e\Bigg|\F_t\right)=Y_t+\e.
\]
Letting $n\rightarrow\infty$ and using Lemma \ref{lm4.1} we obtain
\begin{equation}\label{eq.payoff1}
\E(R_t(\sigma_t^\e,\tau)|\F_t)\leq Y_t +\e.
\end{equation}
Now, let us consider the stopping time
$\tau_t^\e=\inf\{s>t:Y_s\leq L_s+\e\}\wedge T$. Analysis similar
to that in the proof of (\ref{eq.payoff1}) shows that for any
$\e>0$ and $\sigma\in\T_t$,
\begin{equation}\label{eq.payoff2}
\E(R_t(\sigma,\tau_t^\e)|\F_t)\geq Y_t -\e.
\end{equation}
Combining \eqref{eq.payoff1} with \eqref{eq.payoff2} we see that
for any $\varepsilon>0$,
\begin{equation}
\label{eq4.5} -\e+\E(R_t(\sigma_t^\e,\tau)| \F_t)\le Y_t\le
\E(R_t(\sigma,\tau_t^\e)| \F_t)+\e.
\end{equation}
Thus (\ref{eq4.3}) is satisfied, and, in consequence, the game has
a value.   Moreover, from (\ref{eq4.5}) and the definitions of
$\underline{V},\overline{V}$ it follows that
$-\e+\overline{V}_t\leq Y_t\leq\underline{V}_t+\e$, $t\in[0,T]$,
for $\e>0$. Since we already know that the game has a value, this
implies (\ref{eq4.2}).
\end{proof}

Note that Dynkin games were studied, in different contexts, by
several authors. For results related to Theorem \ref{tw4.2} we
defer the reader to
\cite{CvitKar1996,klimsiak2014,Lepeltier2007,Stet1982,Zab1984} and
references given there.

\end{document}